\newtheorem{theorem}{Theorem}[section]
\newtheorem{definition}{Definition}[section]
\newtheorem{lemma}{Lemma}[section]
\newtheorem{proposition}{Proposition}[section]
\newtheorem{remark}{Remark}[section]
\numberwithin{equation}{section}
\newcommand{\mr}{\mathbb{R}}
\newcommand{\p}{\partial}
\newcommand{\N}{{\mathbb N}}
\newcommand{\e}{\varepsilon}
\newcommand{\n}{\nu}
\newcommand{\LC}{\left(}
\newcommand{\RC}{\right)}
\newcommand{\LB}{\left[}
\newcommand{\RB}{\right]}
\newcommand{\LCB}{\left\{}
\newcommand{\RCB}{\right\}}
\newcommand{\normmm}[1]{{\left\vert\kern-0.25ex\left\vert\kern-0.25ex\left\vert #1\right\vert\kern-0.25ex\right\vert\kern-0.25ex\right\vert}}
\begin{document}
\title[fractional Camassa-Holm equation]
{The Cauchy problem for fractional Camassa-Holm equation in Besov space}%
\author[Lili Fan]{Lili Fan}%
\address[Lili Fan]{College of Mathematics and Information Science,
Henan Normal University, Xinxiang 453007, China}
\email{fanlily89@126.com}
\author[Hongjun Gao]{Hongjun Gao}
\address[Hongjun Gao]{School of Mathematical Sciences, Institute of Mathematics,
 Nanjing Normal University, Nanjing 210023, China}
\email{gaohj@hotmail.com}
\author[Junfang Wang ]{Junfang Wang}%
\address[Junfang Wang]{School of Mathematics and statistics, North China University of Water Resources and Electric Power, Zhengzhou, Henan 450045,   China}
\email{wangjunfang18@ncwu.edu.cn}
\author[Wei Yan $^{\dag}$]{Wei Yan$^{\dag}$}%
\address[Wei Yan]{College of Mathematics and Information Science,
Henan Normal University, Xinxiang 453007, China}
\email{011133@htu.edu.cn\, (Corresponding author)}

% --------------------------------------------------------------

\begin{abstract}In this paper, we consider the fractional Camassa-Holm equation modelling the propagation of small-but-finite amplitude long unidirectional waves in a nonlocally and nonlinearly elastic medium. First, we establish the local well-posedness in Besov space $B^{s_0}_{2,1}$ with $s_0=2\nu-\frac 1 2$ for $\nu>\frac 3 2 $ and $s_0=\frac 5 2$ for $1<\nu\leq \frac 3 2 $. Then, with a given analytic initial data, we establish the analyticity of the solutions in both variables, globally in space and locally in time.
\end{abstract}

\date{}

\maketitle
% -----------------------------------------

\noindent {\sl Keywords\/}: Besov spaces; Fractional Camassa-Holm equation; Local well-posedness; Blow-up criterion; Analyticity

%\vskip 0.2cm

\noindent {\sl AMS Subject Classification} (2010): 35Q53; 35B30; 35G25. \\

\section{Introduction}
\large
This paper is concerned with an evolution equation, named fractional Camassa-Holm (fCH) equation, which models the propagation of small-but-finite amplitude, long unidirectional waves in a one-dimensional infinite, homogeneous medium made of nonlocally and nonlinearly elastic material \cite{EEE}
\begin{align}\label{1.1}
 u_t&+u_x+uu_x+\frac {3} {4}(-\p_x^2)^\n u_{x}+\frac {5} {4} (-\p_x^2)^\n u_{t}\nonumber\\
 &+\frac {1} 4\left[2(-\p_x^2)^\n (uu_x)+u(-\p_x^2)^\n u_{x}\right]=0,\qquad x\in\mr,\quad t>0,
\end{align}
where $\n\geq 1$ is a constant which may not be an integer. It is remarkable that when $\n=1$, \eqref{1.1} reduces to the following classical Camassa-Holm (CH) equation
\begin{equation}\label{1.2}
u_t+k_1(u_x-u_{xxx})+3uu_x-u_{xxt}=k_2\left(2u_xu_{xx}+uu_{xxx}\right).
\end{equation}
This prominent CH equation, for which the ratio of the nonlinear terms of \eqref{1.2} being $3:2:1$, was first derived formally by Fuchssteiner and Fokas \cite{FF} as a bi-Hamiltonian equation and later derived in the context of water waves as a model for unidirectional propagation of shallow water waves of moderate amplitude by Camassa and Holm \cite{CH} (see also the alternative derivation in \cite{Co,Co5,J}). The CH equation has been studied extensively in the last twenty years because of its many remarkable properties: infinitely many conservation laws and complete integrability \cite{CH,FS}, existence of peaked solitons and multi-peakons \cite{ACH,CH}, well-posedness and breaking waves \cite{Br1,Br2,Co1,Co2,Co3,Co4,RD1,RD2,HMPZ}, just to mention a few.

Recent years, increasing attention has been paid to the fractional equations. For instance, the fractional Korteweg-de Vries (fKdV) equation and the fractional Benjamin-Bona-Mahony (fBBM) equation have been obtained and studied in \cite{EEE,J0,P}, the Camassa-Holm equations with fractional dissipation and the Camassa-Holm equations with fractional laplacian viscosity have been investigated in \cite{GHM,GL}. Concerning the fCH equation \eqref{1.1}, the local well-posedness for initial data $u_0 \in H^{s}(\mr)$, $s>\frac{5}{2}$ has been established by employing a semigroup approach due to Kato \cite{Ka1} in \cite{Mu}, which, to our knowledge, is the only result on the Cauchy problem for the fCH equation.

In this paper, we refine the corresponding result in \cite{Mu} by investigating the local well-posedness of the Cauchy problem for \eqref{1.1} in Besov space $B^{s_0}_{2,1}$ with $s_0=2\nu-\frac 1 2$ for $\nu>\frac 3 2 $ and $s_0=\frac 5 2$ for $1<\nu\leq \frac 3 2 $. By virtue of the Littlewood-Paley decomposition, nonhomogeneous Besov spaces and iterative method, the methods proposed in \cite{RD1,RD2,RD3} have been applied with success when studying the well-posedness of various shallow water wave equations in Besov space (see for example \cite{FG,FY,Liu,HT,Yin,MY,ZY,YYY}). To obtain our result, we felicitously recast the equation \eqref{1.1} in a form of nonlocal conservation law
\begin{equation}\label{3.1}
  u_t+\frac 3 5(1+u)u_x=-\LC1+\frac 5 4(-\p_x^2)^\n\RC^{-1}\LC\frac 2 5 u_x+ \frac 2 5 uu_x+\frac 1 4[u,(-\p_x^2)^\n]u_x\RC,
\end{equation}
where $[,]$ denotes the usual commutator of the linear operators. Then another difficulty arises as the appearance of commutator estimates of $[u,(-\p_x^2)^\n]u_x$ in $B^{s_0}_{2,1}$ and $B^{s_0-1}_{2,\infty}$. By the Bony decomposition, we break up the commutator term into the paraproduct terms and the remainder terms, which is beneficial to employing their continuity properties, and hence the desired estimates are obtained as presented in Lemma \ref{com-e}. Then we can establish a uniform bound for the approximate solutions on a sufficiently small time-interval by employing the mean value theorem for integrals. This opens the path to obtain convergence and thus existence and uniqueness and continuous dependence on the initial datum are treated in separate steps of the proof.

Furthermore, provided that the initial profile $u_0$ is an analytic function on the real line $\mr$, we obtain the analyticity of the corresponding solutions in both variables, with $x\in\mr$ and $t$ in an interval around zero. Analyticity is inherent to travelling water waves (see \cite{CE}).

We supplement \eqref{1.1} with the initial data
\begin{equation}\label{1.3}
u(x,0)=u_0(x),\quad x\in \mathbb{R}.
\end{equation}
To introduce the main results, we define
\[
E^{s_0}_{2,1}(T)=C([0,T];B^{s_0}_{2,1})\cap C^1([0,T];B^{s_0-1}_{2,1}).
\]
The main results of this paper are as follows:

\begin{theorem}\label{the1.1}
Let $u_0 \in B^{s_0}_{2,1}$ with $s_0=2\nu-\frac 1 2$ for $\nu>\frac 3 2 $ and $s_0=\frac 5 2$ for $1<\nu\leq \frac 3 2 $. There exists a time $T>0$ such that the problem \eqref{1.1} and \eqref{1.3} has a unique solution in $E^{s_0}_{2,1}(T)$. Moreover, the solution depends continuously on the initial data, i.e., the mapping $\Phi:u_0\mapsto u$ is continuous from a neighborhood of $u_0 \in B^{s_0}_{2,1}$ into $E^{s_0}_{2,1}(T)$.
\end{theorem}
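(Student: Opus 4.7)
The plan is to prove local existence and uniqueness by the Friedrichs iteration method adapted to the nonlocal conservation law \eqref{3.1}, following the Danchin-type framework the paper references in \cite{RD1,RD2,RD3,Yin}. First I would construct a sequence of approximate solutions $(u^{(n)})_{n\in\mathbb{N}}$ by setting $u^{(0)}(t,x):=S_0 u_0(x)$ and, for $n\geq 0$, letting $u^{(n+1)}$ be the unique solution of the linear transport equation
\begin{equation*}
\partial_t u^{(n+1)} + \tfrac{3}{5}(1+u^{(n)})\,\partial_x u^{(n+1)} = F(u^{(n)}), \qquad u^{(n+1)}\big|_{t=0}=S_{n+1}u_0,
\end{equation*}
where
\begin{equation*}
F(v):=-\Bigl(1+\tfrac{5}{4}(-\partial_x^2)^\nu\Bigr)^{-1}\!\Bigl(\tfrac{2}{5}v_x+\tfrac{2}{5}vv_x+\tfrac{1}{4}[v,(-\partial_x^2)^\nu]v_x\Bigr).
\end{equation*}
The Fourier multiplier $(1+\tfrac{5}{4}(-\partial_x^2)^\nu)^{-1}$ smooths by $2\nu$ derivatives, so once the commutator estimate of Lemma \ref{com-e} is in place, $F$ maps $B^{s_0}_{2,1}$ into itself, and the linear transport equation at each step has a smooth global solution by standard theory.

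Next I would derive uniform a priori bounds for $(u^{(n)})$ in $L^\infty([0,T];B^{s_0}_{2,1})$ on a short time interval. For this I would apply the standard Besov-scale transport estimate to the linear problem above, using product, composition, and (crucially) the commutator estimates of Lemma \ref{com-e} to control $F(u^{(n)})$. The mean value theorem for integrals applied to the resulting differential inequality of the schematic form
\begin{equation*}
\|u^{(n+1)}(t)\|_{B^{s_0}_{2,1}} \leq \|u_0\|_{B^{s_0}_{2,1}} + C\int_0^t \bigl(1+\|u^{(n)}(\tau)\|_{B^{s_0}_{2,1}}\bigr)\bigl(1+\|u^{(n+1)}(\tau)\|_{B^{s_0}_{2,1}}\bigr)\,d\tau
\end{equation*}
yields a uniform existence time $T=T(\|u_0\|_{B^{s_0}_{2,1}})>0$ and a uniform bound. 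Convergence is then obtained by showing that $(u^{(n)})$ is Cauchy in $C([0,T];B^{s_0-1}_{2,1})$: writing the equation satisfied by $u^{(n+1)}-u^{(n)}$ and estimating in the weaker norm absorbs the one-derivative loss thanks to the uniform bound already established at the higher level, and a geometric inductive argument closes the estimate.

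Having a limit $u\in L^\infty([0,T];B^{s_0}_{2,1})\cap C([0,T];B^{s_0-1}_{2,1})$, I would pass to the limit in the equation to identify $u$ as a solution of \eqref{3.1} (hence of \eqref{1.1}); the Fatou property for $B^{s_0}_{2,1}$ combined with the fact that $\partial_t u$ is given pointwise by the right-hand side of \eqref{3.1} upgrades the regularity to $E^{s_0}_{2,1}(T)$. Uniqueness follows by running the same commutator-based energy estimate for the difference of two solutions at the regularity level $B^{s_0-1}_{2,1}$ and applying Gronwall's inequality. Continuous dependence is the most delicate statement; I would treat it by a Bona--Smith--type argument, regularizing the initial data, exploiting uniform higher-order bounds together with the difference estimate, and concluding continuity of the data-to-solution map from a neighbourhood of $u_0$ in $B^{s_0}_{2,1}$ into $E^{s_0}_{2,1}(T)$.

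The main obstacle throughout is the quasilinear commutator $[u,(-\partial_x^2)^\nu]u_x$ appearing inside the smoothing operator. A naive product estimate loses more derivatives than the $2\nu$ gained from the inverse operator, so the Bony paraproduct decomposition must be used to split the commutator into paraproducts and remainders and to distribute the derivatives in the only manner that allows both cases $\nu>\frac{3}{2}$ and $1<\nu\leq\frac{3}{2}$ to close. The threshold $\nu=\frac{3}{2}$ in the statement is exactly the value where $2\nu-\frac{1}{2}$ crosses $\frac{5}{2}$: below it, the fractional dispersion no longer provides enough smoothing for $s_0$ to track $\nu$ linearly, forcing the fixed choice $s_0=\frac{5}{2}$. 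Capturing this sharp dichotomy uniformly in Lemma \ref{com-e} and plugging it into both the $B^{s_0}_{2,1}$ bound and the $B^{s_0-1}_{2,\infty}$ difference estimate is where essentially all of the technical work of the proof will reside.
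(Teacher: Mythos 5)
Your overall architecture matches the paper's: the same Friedrichs-type iteration on the transport form of the equation with $S_{n+1}u_0$ as data, uniform bounds from the Besov transport estimate combined with the commutator estimates of Lemma \ref{com-e} and the mean value theorem, convergence in a weaker norm, and a difference estimate for uniqueness. However, there is a genuine gap in the mechanism you propose for closing the convergence and uniqueness steps, and it sits exactly at the critical-index difficulty that motivates working in $B^{s_0}_{2,1}$ rather than $H^s$ with $s>\frac52$. The commutator bounds available one derivative down, \eqref{com-e2} and \eqref{com-e3}, produce only the $\ell^\infty$-based norm $B^{s_0-1-2\nu}_{2,\infty}$ on the left while requiring $\ell^1$-based norms such as $\|u^{(n+m)}-u^{(n)}\|_{B^{s_0-1}_{2,1}}$ on the right (for $1<\nu\le\frac32$ this is forced: the remainder term sits at $s_1+s_2=0$, where Lemma \ref{est-R} only yields an $\ell^\infty$ output). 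Consequently the difference inequality has mismatched norms, schematically $w_{n+1,m}(t)\le CM\bigl(2^{-n}+\int_0^t\|u^{(n+m)}-u^{(n)}\|_{B^{s_0-1}_{2,1}}\,d\tau\bigr)$ with $w_{n,m}$ measured in $B^{s_0-1}_{2,\infty}$; neither a geometric induction nor a plain Gronwall in $B^{s_0-1}_{2,1}$ closes it. The paper's fix is to work in $B^{s_0-1}_{2,\infty}$, invoke the logarithmic interpolation inequality (11) of Lemma \ref{lem2.2} to convert the $B^{s_0-1}_{2,1}$ norm into $w\ln(e+M/w)$, and apply an Osgood-type lemma; the $\ell^1$ convergence is only recovered afterwards via the real interpolation \eqref{2.1}. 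The same issue undermines your uniqueness step as stated: the paper's Proposition \ref{pro3.1} yields only a H\"older-type stability estimate in $B^{s_0-1}_{2,\infty}$ (exponent $\exp[-CtZ\ln(e+Z)]$, valid for small initial differences), not a Lipschitz Gronwall bound in $B^{s_0-1}_{2,1}$.

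For continuous dependence you propose a Bona--Smith regularization, whereas the paper follows Danchin's route: it decomposes $\partial_xu^{(n)}$ into two pieces transported by the same convecting field and applies the stability result for transport flows (Lemma \ref{lem2.5}) together with the first-step continuity in the weaker norm. Bona--Smith is a legitimate alternative in critical Besov spaces, but since the stability at the lower level is only of Osgood type here, you would need to quantify how the approximation error depends on the regularization parameter; the paper's route avoids this by never needing a rate.
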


\begin{remark}\label{rem1.1} We obtain the local well-posedness of equations \eqref{1.1} and \eqref{1.3} in the case $B^{s_0}_{2,1}$. However, this is not true in the case $B^{s_0}_{2,\infty}$ in view of the proof of Proposition 4 in \cite{RD2}. Noting that $B^{s_0}_{2,1}\hookrightarrow H^{s_0} \hookrightarrow B^{s_0}_{2,\infty}$, one can see that $s=s_0$ is the critical index.
\end{remark}

Referring to the definition of the space $E_{s}$ in \eqref{5.1}, we present the following analytic result.
\begin{theorem}\label{the1.2}
If the initial data $u_0$ is real analytic on the line $\mr$ and belongs to a space $E_{s}$ for some $0<s\leq 1$, then there exist an $\e>0$ and a unique solution $u$ to the problem \eqref{1.1} and \eqref{1.3} that is analytic on $\mr\times[0,\e)$.
\end{theorem}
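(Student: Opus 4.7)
The plan is to cast the Cauchy problem as an autonomous abstract evolution equation and invoke an abstract Cauchy--Kovalevskaya theorem in the scale $\{E_s\}_{0<s\le 1}$ of Banach spaces of real-analytic functions (of radius at least $s$), in the spirit of Baouendi--Goulaouic, Nishida, and the application of this machinery to the classical CH equation by Himonas--Misio{\l}ek. Recall that the scale is ordered by $E_{s}\hookrightarrow E_{s'}$ for $s'<s$, and that differentiation costs $\|\partial_x u\|_{s'}\le C(s-s')^{-1}\|u\|_s$.

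Rewriting \eqref{1.1} in the nonlocal conservation form \eqref{3.1}, I would set
\[
F(u):=-\frac{3}{5}(1+u)u_x-\Lambda_\nu^{-1}\!\left(\frac{2}{5}u_x+\frac{2}{5}uu_x+\frac{1}{4}[u,(-\p_x^2)^\nu]u_x\right),
\qquad \Lambda_\nu:=1+\tfrac{5}{4}(-\p_x^2)^\nu,
\]
so that \eqref{1.1}--\eqref{1.3} becomes $u_t=F(u)$, $u(0)=u_0$. The core of the argument is to verify the two Cauchy--Kovalevskaya conditions
\[
\|F(u)-F(v)\|_{s'}\le \frac{C(R)}{s-s'}\|u-v\|_{s},\qquad
\|F(u)\|_{s'}\le \frac{C(R)}{s-s'}\|u\|_s(1+\|u\|_s),
\]
for all $0<s'<s\le 1$ and $u,v$ in a fixed ball $B_{E_s}(0,R)$. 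Analytic spaces $E_s$ are Banach algebras under pointwise multiplication for a sufficiently high base Sobolev index, so the transport-type pieces $(1+u)u_x$ and $uu_x$ contribute the standard derivative-loss estimate. The multiplier $\Lambda_\nu^{-1}$ is bounded on each $E_s$ with norm independent of $s$ (its symbol is bounded and smoothing of order $2\nu$), which is what makes the whole right-hand side controllable without loss.

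The decisive step is the commutator $[u,(-\p_x^2)^\nu]u_x$ in the analytic scale. My plan is to upgrade Lemma \ref{com-e} to the spaces $E_s$: split the commutator via Bony's paraproduct decomposition into $T'_{u_x}$-type paraproducts plus a remainder, apply the continuity of these bilinear operators on each fibre, and then propagate the estimates through the scale by the standard trick of converting the loss of $2\nu-1$ derivatives on the fractional side into a factor $(s-s')^{-(2\nu-1)}$ via term-by-term bounds on the Taylor coefficients, after which the $2\nu$-regularization supplied by $\Lambda_\nu^{-1}$ absorbs this loss down to a single $1/(s-s')$. Granting these bounds, the abstract CK theorem produces an $\e>0$, a rate $L>0$, and a unique solution $u\in C^1([0,\e);E_{s_0-Lt})$ for some $s_0\in(0,s]$; by construction $u(\cdot,t)$ is real-analytic in $x$ on $\mr$ with a uniform positive radius on $[0,\e)$.

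Joint analyticity on $\mr\times[0,\e)$ is then extracted by a bootstrap from the equation: with $u$ analytic in $x$ into an analytic-preserving scale, the identity $u_t=F(u)$ gives the first time derivative in $E_{s_0-Lt}$; differentiating the equation repeatedly and using the same Lipschitz-in-scale estimates yields Cauchy bounds of the form $\|\p_t^k u(\cdot,t)\|_{s(t,k)}\le M^k k!$ on a smaller strip, which after combining with the $x$-analyticity produces a convergent bivariate Taylor series around any $(x_0,t_0)\in\mr\times[0,\e)$. The main obstacle throughout is the fractional commutator: unlike the $\nu=1$ case where the Leibniz rule closes everything at once, the nonlocality of $(-\p_x^2)^\nu$ forces a harmonic-analytic (dyadic) treatment that must be compatible with the factorial weights built into $E_s$, and the constants must remain uniform as $\nu\downarrow 1$ to cover the full range $\nu\ge 1$ declared in Theorem \ref{the1.2}.
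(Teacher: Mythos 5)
Your overall strategy coincides with the paper's: recast the equation in nonlocal conservation form, work in the scale $E_s$ of \eqref{5.1}, and invoke the abstract Cauchy--Kowalevski theorem (Theorem \ref{the5.1}), with the Besov commutator estimates of Lemma \ref{com-e} supplying the decisive bilinear bound. (The paper additionally rewrites the problem as a first-order system for $(u,u_x)$ before applying the abstract theorem, but that is a cosmetic difference.)

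The genuine problem lies in your treatment of the commutator term, which is precisely the step you single out as decisive. You propose to first estimate $[u,(-\p_x^2)^\n]u_x$ in the scale, incurring a factor $(s-s')^{-(2\n-1)}$, and then to let the $2\n$-smoothing of $\Lambda_\n^{-1}$ ``absorb this loss down to a single $1/(s-s')$.'' This cannot work: a power of $(s-s')^{-1}$ records a shrinkage of the analyticity radius, and no fibre-wise smoothing operator can restore a radius already given up --- the scale parameter only decreases along the argument. Moreover, for $\n>1$ the exponent $2\n-1>1$ is fatal for the abstract theorem, which tolerates only a first-order singularity in $s-s'$. The correct order of operations (the paper's, in \eqref{e3} of Lemma \ref{lem5.1}) is to compose with $P(D)$ \emph{before} passing to the scale: by \eqref{com-e1} the commutator maps $B^{s_0}_{2,1}\times B^{s_0-1}_{2,1}$ into $B^{s_0-2\n}_{2,1}$, and $P(D)$ gains $2\n$ derivatives, so $(u,v)\mapsto P(D)[u,(-\p_x^2)^\n]v_x$ is a bounded bilinear map from $B^{s_0}_{2,1}\times B^{s_0}_{2,1}$ to $B^{s_0}_{2,1}$, i.e.\ of order zero on each fibre. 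One then lifts to $E_s$ by expanding $\p_x^k$ of the commutator via the Leibniz rule, $\p_x^k[u,(-\p_x^2)^\n]v_x=\sum_l\binom{k}{l}[\p_x^{k-l}u,(-\p_x^2)^\n]\p_x(\p_x^l v)$, and applying the fibre estimate term by term; this yields $\normmm{P(D)[u,(-\p_x^2)^\n]v_x}_{s}\leq C\normmm u_{s}\normmm v_{s}$ with \emph{no} loss in the scale parameter at all, and the only $1/(s-s')$ in the final Lipschitz bound comes from the explicit transport derivative $\p_x(u^2)$. Two smaller points: the abstract theorem already delivers real analyticity of $t\mapsto u(t)$ with values in $E_s$, so your separate bootstrap of $\p_t^k u$ is unnecessary; and since $\n$ is fixed in the statement, uniformity of the constants as $\n\downarrow 1$ is not required.
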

\begin{remark}\label{rem1.2}
A rereading of the proof of Theorem \ref{the1.1} and Theorem \ref{the1.2} yields that there exists a real analytic extension of $u$ to $(-\e,\e)$.
\end{remark}

In the sequel, we will, for notational convenience, deal with the following initial value problem with different coefficients, which implies the Theorem \ref{the1.1}-Theorem\ref{the1.2} as the concrete values of the coefficients have no impact on the results.
\begin{equation}\label{3.2}
\begin{cases}
u_t+(1+u)u_x=\p_x P(D)f_1(u)+P(D)f_2(u,u_x),\\
u(0,x)=u_0(x),\\
\end{cases}
\end{equation}
with the operator $P(D)=-\LC1+(-\p_x^2)^\n\RC^{-1}$, and
\begin{align}\label{3.3}
f_1(u)=u+u^2,\quad\;\;\; f_2(u,u_x)=[u,(-\p_x^2)^\n]u_x.
\end{align}

The rest of this paper is organized as follows. In Section 2, we give some preliminaries.
Section 3 concentrates on the primary commutator estimates needed in the proof of Theorem \ref{the1.1} and Theorem \ref{the1.2}. Section 4 aims at proving the local well-posedness for the Cauchy problem \eqref{1.1} and \eqref{1.3} in Besov space $B^{s_0}_{2,1}$. Section 5 is devoted to studying the analyticity of the Cauchy problem \eqref{1.1} and \eqref{1.3} based on a contraction type argument in a suitably chosen scale of the Banach spaces.
\section{preliminaries}

For convenience of the reader, we recall some conclusions on the properties of Littlewood-Paley decomposition, the nonhomogeneous Besov spaces and the theory of the transport equation. One may check \cite{RD4,RD1,RD2,RD3,MWZ} for more details.

\begin{lemma}\label{lem2.1}(Littlewood-Paley decomposition). There exist two smooth radial functions $(\chi,\phi)$ valued in $[0,1]$, such that $\chi$ is supported in the ball $B=\{\xi \in \mathbb{R}^n, |\xi|\leq \frac{4}{3}\}$ and $\phi$ is supported in the ring $C=\{\xi \in \mathbb{R}^n, \frac{4}{3}\leq |\xi| \leq \frac{8}{3}\}$. Moreover,
\[
\forall \xi \in \mathbb{R}^n,\quad \chi(\xi)+\sum_{q\geq 0}\phi(2^{-q}\xi)=1
\]
and
\[
\textrm{Supp}\;\phi(2^{-q}\cdot)\cap \textrm{Supp}\;\phi(2^{-q'}\cdot)=\emptyset,\quad if\;|q-q'|\geq2,
\]

\[
\textrm{Supp}\;\chi(\cdot)\cap \textrm{Supp}\;\phi(2^{-q}\cdot)=\emptyset,\quad if\;|q|\geq1.
\]
Then for $u\in \mathcal{S'}(\mathbb{R}^n)$, the nonhomogeneous dyadic operators are defined as follows:
\[
\aligned
\triangle_qu&=0,\quad if\;q\leq-2,\\
\triangle_{-1}u&=\chi(\mathcal{D})u=\mathscr{F}^{-1}_x\chi\mathscr{F}_xu,\\
\triangle_{q}u&=\phi(2^{-q}\mathcal{D})u=\mathscr{F}^{-1}_x\phi(2^{-q}\xi)\mathscr{F}_xu,\quad if q\geq 0.
\endaligned
\]
Thus $u=\sum_{q\geq0}\triangle_qu$ in $\mathcal{S'}(\mathbb{R}^n)$.
\end{lemma}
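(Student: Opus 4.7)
The plan is to execute the standard dyadic-partition-of-unity construction on the Fourier side. First I would choose, by mollifying the indicator of a suitable ball, a smooth radial non-increasing function $\chi\colon \mathbb{R}^n\to [0,1]$ that is identically $1$ on a neighbourhood of the origin and has support in $B(0,4/3)$, and then define the annular bump by $\phi(\xi):=\chi(\xi/2)-\chi(\xi)$. Both are automatically smooth, radial, and $[0,1]$-valued (the latter because $\chi$ is non-increasing in $|\xi|$), and a direct inspection shows that $\phi$ vanishes wherever $\chi(\xi/2)=\chi(\xi)$, in particular on the flat region of $\chi$, so $\operatorname{supp}\phi$ is contained in the dyadic annulus $C$ once the flat region is chosen sufficiently large.

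The partition identity follows from the telescoping cancellation
\[
\chi(\xi)+\sum_{q=0}^{N}\phi(2^{-q}\xi)=\chi(2^{-N-1}\xi),
\]
whose right-hand side tends (uniformly on compact sets) to $\chi(0)=1$ as $N\to\infty$. The disjoint-support conclusions are then immediate from the dilation scaling: for $q\ge 0$ one has $\operatorname{supp}\phi(2^{-q}\cdot)\subset\{(4/3)\,2^q\le |\xi|\le (8/3)\,2^q\}$, so two such dyadic annuli with $|q-q'|\ge 2$ cannot meet (comparing the outer radius $(8/3)\,2^{\min(q,q')}$ with the inner radius $(4/3)\,2^{\max(q,q')}$), while for $|q|\ge 1$ the annulus lies in $\{|\xi|\ge 8/3\}$ and is therefore disjoint from $\operatorname{supp}\chi\subset B(0,4/3)$.

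Finally, the Littlewood--Paley operators are realised as Fourier multipliers $\triangle_{-1}u:=\mathscr{F}^{-1}(\chi\,\mathscr{F}u)$ and, for $q\ge 0$, $\triangle_q u:=\mathscr{F}^{-1}(\phi(2^{-q}\cdot)\mathscr{F}u)$; since $\chi$ and each $\phi(2^{-q}\cdot)$ are Schwartz, each $\triangle_q u$ is well defined as a tempered distribution. The decomposition $u=\sum_{q\ge -1}\triangle_q u$ in $\mathcal{S}'(\mathbb{R}^n)$ then follows by dualising the partition identity: the partial-sum multipliers $\chi+\sum_{q=0}^N\phi(2^{-q}\cdot)=\chi(2^{-N-1}\cdot)$ form a bounded family of Schwartz functions converging to $1$ pointwise, hence they converge in the sense of tempered-distribution multipliers, giving $\chi(2^{-N-1}\cdot)\mathscr{F}u\to\mathscr{F}u$ in $\mathcal{S}'$. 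Nothing deep is required; the one delicate point is the precise geometry of the support of $\phi$, which pins down how large a neighbourhood of $0$ one must include in the flat region of $\chi$ in order to obtain the stated inner radius.
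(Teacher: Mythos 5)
The paper offers no proof of this lemma at all: it is recalled as a preliminary and delegated to \cite{RD4,RD1,RD2,RD3,MWZ}. Your construction is the standard one from those references (mollified cut-off $\chi$, telescoping bump $\phi(\xi)=\chi(\xi/2)-\chi(\xi)$, partial sums collapsing to $\chi(2^{-N-1}\xi)$, convergence of the truncation multipliers to $1$ in $\mathcal{S'}$), and all of those steps are sound.

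The one step that does not go through is exactly the one you yourself flag as delicate: the localization of the support of $\phi$. From $\phi(\xi)=\chi(\xi/2)-\chi(\xi)$ you get $\phi=0$ only where $\chi(\xi/2)=\chi(\xi)$, i.e.\ on the region $\{|\xi|\le r_0\}$ where $\chi\equiv 1$ and on $\{|\xi|\ge 8/3\}$, so $\operatorname{supp}\phi\subset\{r_0\le|\xi|\le 8/3\}$. Since $\chi$ is smooth, non-constant, and supported in $\{|\xi|\le 4/3\}$, necessarily $r_0<4/3$; no choice of the flat region pushes the inner radius up to $4/3$ as the lemma asserts. In fact the statement with inner radius $4/3$ is not merely out of reach of your construction but impossible: consecutive annuli $\{(4/3)2^q\le|\xi|\le(8/3)2^q\}$ and $\{(4/3)2^{q+1}\le|\xi|\le(8/3)2^{q+1}\}$ meet only on the sphere $|\xi|=(8/3)2^q$, so continuity of the $\phi(2^{-q}\cdot)$ together with the partition identity would force both $\phi(2^{-q}\xi)$ and $\phi(2^{-q-1}\xi)$ to equal $1$ on that sphere, contradicting the identity there. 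The correct and standard version (Bahouri--Chemin--Danchin \cite{RD4}) has $\operatorname{supp}\phi\subset\{3/4\le|\xi|\le 8/3\}$, which your construction does deliver and for which your disjointness computations survive unchanged, since $(8/3)2^{q}<(3/4)2^{q+2}$ and $(3/4)\cdot 2>4/3$. So your argument proves the correct form of the lemma; you should say explicitly that the stated inner radius $4/3$ is a typo for $3/4$ (just as the final display of the statement should read $u=\sum_{q\ge-1}\triangle_q u$, which is what you in fact wrote), rather than claim a support containment your $\phi$ cannot satisfy.
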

\begin{remark}\label{rem2.1} The low frequency cut-off $S_q$ is defined by
\[
S_qu=\sum_{p=-1}^{q-1}\triangle_pu=\chi(2^{-q}\mathcal{D})u=\mathscr{F}^{-1}_x\chi(2^{-q}\xi)
\mathscr{F}_xu,\quad \forall q \in N.
\]
It is easily checked that
\[
\aligned
\triangle_p\triangle_q&\equiv0,\quad |p-q|\geq2,\\
\triangle_q(S_{p-1}u\triangle_pv)&\equiv0,\quad |p-q|\geq5,\;\;\forall u,v\in \mathcal{S'}(\mathbb{R}^n)
\endaligned
\]
as well as
\[
\|\triangle_qu\|_{L^p}\leq \|u\|_{L^p},\qquad \|S_qu\|_{L^p}\leq C\|u\|_{L^p},\quad \forall 1\leq p\leq +\infty
\]
with the aid of Young's inequality, where C is a positive constant independent of $q$.
\end{remark}
\begin{definition}\label{def2.1} (\emph{Besov spaces}). Let $s\in \mathbb{R}$, $1\leq p\leq +\infty$. The nonhomogeneous Besov space $B^s_{p,r}(\mathbb{R}^n)$ is defined by
\[
B^s_{p,r}(\mathbb{R}^n)=\{f\in \mathcal{S'}(\mathbb{R}^n):\|f\|_{B^s_{p,r}}=\|2^{qs}\triangle_{q}f\|_{l^r(L^P)}
=\|(2^{qs}\|\triangle_qf\|_{L^p})_{q \geq -1}\|_{l^r}<\infty \}.
\]
In particular, $B^{\infty}_{p,r}=\bigcap_{s\in \mathbb{R}}B^s_{p,r}$.
\end{definition}
\begin{lemma}\label{lem2.2}Let $s\in \mathbb{R}$, $1\leq p,r,p_{j},r_{j} \leq \infty$, $j=1,2$, then:

\noindent(1) Topological properties: $B^s_{p,r}$ is a Banach space which is continuously embedded in $ \mathcal{S'}$.

\noindent(2) Density: $C^{\infty}_{c}$ is dense in $B^s_{p,r} \Leftrightarrow 1\leq p,r < \infty$.

\noindent(3) Embedding: $B^s_{p,r}\hookrightarrow B^{\tilde{s}}_{p,\tilde{r}}$, if $\tilde{s}<s$ or $\tilde{s}=s$ and $\tilde{r}\geq r$. $B^s_{p_1,r_1}\hookrightarrow B^{s-n(\frac{1}{p_1}-\frac{1}{p_2})}_{p_2,r_2}$, if $ p_1\leq p_2$ and $ r_1\leq r_2$ and $B^0_{p,1}\hookrightarrow L^p \hookrightarrow B^{0}_{p,\infty}$.

\noindent(4) Algebraic properties: $\forall s>0$, $ B^s_{p,r}\cap L^{\infty} $ is a Banach algebra. $ B^s_{p,r}$ is a Banach algebra $\Leftrightarrow B^s_{p,r}\hookrightarrow L^{\infty} \Leftrightarrow s>\frac{1}{p}$ or ( $s \geq \frac{1}{p}$ and $r=1$ ). In particular, $B^{1/2}_{2,1}$ is continuously embedded in $B^{1/2}_{2,\infty} \cap L^{\infty}$ and $ B^{1/2}_{2,\infty} \cap L^{\infty} $ is a Banach algebra.

\noindent(5) 1-D Moser-type estimates:

(i) For $s>0$,
\[
\|fg\|_{B^s_{p,r}} \leq C(\|f\|_{B^s_{p,r}}\|g\|_{L^{\infty}}+\|g\|_{B^s_{p,r}}\|f\|_{L^{\infty}}).
\]

(ii) $\forall s_1\leq \frac{1}{p}< s_2$ ( $s_2\geq \frac{1}{p}$ if $r=1$ ) and $s_1+s_2>0$, we have
\[
\|fg\|_{B^{s_1}_{p,r}} \leq C\|f\|_{B^{s_1}_{p,r}}\|g\|_{B^{s_2}_{p,r}}.
\]

\noindent(6) Complex interpolation:
\[
\|f\|_{B^{\theta s_1+(1-\theta)s_2}_{p,r}}\leq \|f\|^{\theta}_{B^{s_1}_{p,r}}\|f\|^{1-\theta}_{B^{s_2}_{p,r}},\quad \forall f\in B^{s_1}_{p,r}\cap B^{s_2}_{p,r},\quad \forall \theta \in [0,1].
\]

\noindent(7) Real interpolation: $ \forall \theta \in (0,1), s_1<s_2, s=\theta s_1+(1-\theta)s_2$, there exists a constant $C$ such that
\[
\|u\|_{B^s_{p,1}} \leq \frac{C(\theta)}{s_2-s_1}\|u\|^{\theta}_{B^{s_1}_{p,\infty}}
\|u\|^{1-\theta}_{B^{s_2}_{p,\infty}}, \forall u\in B^{s_1}_{p,\infty}.
\]

In particular, for any $0<\theta<1$, we have
\begin{equation}\label{2.1}
\|u\|_{B^{s_0-1}_{2,1}} \leq \|u\|_{B^{s_0-\theta}_{2,1}}\leq
C(\theta) \|u\|^{\theta}_{B^{s_0-1}_{2,\infty}}
\|u\|^{1-\theta}_{B^{s_0}_{2,\infty}}.
\end{equation}

\noindent(8) Fatou lemma: if $(u_n)_{n\in \mathbb{N}} $is bounded in $B^s_{p,r}$ and $u_n\rightarrow u$ in $\mathcal{S'}$, then $u \in B^s_{p,r}$ and
\[
\|u \|_{ B^s_{p,r}} \leq \liminf_{n\rightarrow \infty}\|u_{n}\|_{ B^s_{p,r}}.
\]

\noindent(9) Let $m\in \mathbb{R}$ and $f$ be an $s^m$ -multiplier (i.e., $f: \mathbb{R}^n\rightarrow \mathbb{R}$ is smooth and satisfies that $\forall \alpha \in N^n$, $\exists$ a constant $C_{\alpha}$, s.t. $|\partial_{\alpha} f(\xi)|\leq C_{\alpha} (1+|\xi|)^{m-|\alpha|}$ for all $\xi \in \mathbb{R}^n)$. Then the operator $f(D)$ is continuous from $B^s_{p,r}$ to $B^{s-m}_{p,r}$.

\noindent(10) The paraproduct is continuous from $B^{-1/p}_{p,1}\times (B^{1/p}_{p,\infty} \cap L^{\infty}) $ to $B^{-1/p}_{p,1}$, i.e.,
\[
\|fg\|_{B^{-1/p}_{p,\infty}}\leq C \|f\|_{B^{-1/p}_{p,1}}\|g\|_{B^{1/p}_{p,\infty} \cap L^{\infty}}.
\]

\noindent(11) A logarithmic interpolation inequality
\[
\|f\|_{B^{s}_{p,1}}\leq C \|f\|_{B^{s}_{p,\infty}}
\ln(e+\frac{\|f\|_{B^{s+1}_{p,\infty}}}{\|f\|_{B^{s}_{p,\infty}}}).
\]
\end{lemma}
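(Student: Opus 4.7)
The plan is to prove Lemma 2.2, which collects eleven standard properties of nonhomogeneous Besov spaces; I would cite the references \cite{RD4,MWZ} for the full details and sketch the unifying ideas, all of which rest on the Littlewood--Paley decomposition of Lemma \ref{lem2.1}, the Bernstein inequalities $\|\triangle_q u\|_{L^{p_2}} \lesssim 2^{qn(1/p_1-1/p_2)}\|\triangle_q u\|_{L^{p_1}}$ for $p_1\leq p_2$, and Bony's paraproduct decomposition $uv = T_u v + T_v u + R(u,v)$ with $T_u v = \sum_q S_{q-1}u\,\triangle_q v$ and $R(u,v)=\sum_{|q-q'|\leq 1}\triangle_q u\,\triangle_{q'}v$.

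For items (1)--(3), the Banach space property reduces to the completeness of $\ell^r(L^p)$ via Definition \ref{def2.1}; the continuous embedding into $\mathcal{S}'$ follows by pairing $u=\sum_q \triangle_q u$ against a Schwartz function $\phi$ and controlling $|\langle u,\phi\rangle|$ by finitely many Schwartz seminorms of $\phi$ using the spectral localization of each $\triangle_q u$. Density for $p,r<\infty$ uses truncation, mollification, and dominated convergence in $\ell^r$. The embeddings in (3) are then immediate from Bernstein plus the monotonicity $\ell^{r_1}\hookrightarrow \ell^{r_2}$ when $r_1\leq r_2$.

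For the algebra and Moser estimates (4)--(5), I would estimate each of the three Bony pieces separately. The paraproduct satisfies $T:L^\infty\times B^s_{p,r}\to B^s_{p,r}$ via boundedness of $S_{q-1}$ on $L^\infty$ and the almost-orthogonality observed in Remark \ref{rem2.1}; the remainder satisfies $R:B^{s_1}_{p,r_1}\times B^{s_2}_{p,r_2}\to B^{s_1+s_2}_{p,r}$ whenever $s_1+s_2>0$, summing directly in $\ell^r$. The algebra criterion equivalent to $B^s_{p,r}\hookrightarrow L^\infty$ follows by testing against Littlewood--Paley blocks built from a single frequency. Interpolation items (6)--(7) are proved by applying H\"older to $\ell^r$ norms (for complex interpolation) and by splitting the dyadic sum at an optimally chosen frequency $2^N$ (for real interpolation, with \eqref{2.1} obtained by tuning $\theta$). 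Fatou (8) follows from weak-$*$ lower semicontinuity of $L^p$ combined with Fatou on $\ell^r$.

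For items (9)--(11), the multiplier theorem uses that $f(D)$ restricted to the dyadic annulus $\{|\xi|\sim 2^q\}$ has a convolution kernel of $L^1$ norm $\lesssim 2^{qm}$, which one derives from the symbol bounds by scaling and integration by parts; the paraproduct endpoint (10) is then Bernstein plus almost-orthogonality at the critical index $-1/p$; and the logarithmic interpolation (11) is obtained by splitting $\|f\|_{B^s_{p,1}}=\sum_{q\leq N}+\sum_{q>N}$, bounding the first sum by $N\|f\|_{B^s_{p,\infty}}$ and the second by a geometric series controlled by $\|f\|_{B^{s+1}_{p,\infty}}$, then choosing $N\sim \log(e+\|f\|_{B^{s+1}_{p,\infty}}/\|f\|_{B^s_{p,\infty}})$. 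The main obstacle, if one writes full proofs, would be the sharp index tracking in the Moser-type estimate (5)(ii) near the endpoint $s_2=1/p$, $r=1$, and the careful handling of the low-frequency block $\triangle_{-1}$ in the multiplier statement (9), since the annular Bernstein argument must be supplemented by a separate ball estimate.
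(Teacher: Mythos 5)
Your proposal is consistent with the paper, which offers no proof of this lemma at all and simply defers to the standard references (\cite{RD4,RD1,RD2,RD3,MWZ}); your citation-plus-sketch via Littlewood--Paley blocks, Bernstein inequalities and the Bony decomposition is exactly the route those references take. The only imprecision is your stated remainder bound $R:B^{s_1}_{p,r_1}\times B^{s_2}_{p,r_2}\to B^{s_1+s_2}_{p,r}$, which as written omits the loss $-n/p$ when both factors carry the same finite integrability index (compare the index relation in the paper's Lemma 3.3); in the Moser and algebra estimates one factor is measured in an $L^{\infty}$-based space, so the instance actually needed is correct and the sketch stands.
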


\begin{lemma}\label{lem2.3}
Let $1\leq p,r\leq \infty$ and $s>-\min(\frac{1}{p},1-\frac{1}{p})$. Assume that $f_0\in B^s_{p,r}$, $F\in L^1(0,T;B^s_{p,r})$ and $\partial_xv$ belongs to $L^1(0,T;B^{s-1}_{p,r})$ if $s>1+\frac{1}{p}$ or to $L^1(0,T;B^{1/p}_{p,r}\cap L^{\infty})$ otherwise. If $f\in L^{\infty}(0,T;B^s_{p,r})\cap C([0,T];\mathcal{S'}(\mathbb{R}))$ solves the following 1-D linear transport equation:
\begin{equation}\label{T}
 \left\{\begin{array}{l}
f_t+vf_x=F,\\
f|_{t=0}=f_0.
\end{array}\right. \tag{T}
\end{equation}
then there exists a constant $C$ depending only on $s,p,r$ such that the following statements hold:

\noindent(1) If $r=1$ or $s\neq 1+\frac{1}{p}$, then
\[
\|f\|_{B^s_{p,r}}\leq \|f_0\|_{B^s_{p,r}}+\int^t_0\|F(\tau)\|_{B^s_{p,r}}d\tau
+C\int^t_0V'(\tau)\|f(\tau)\|_{B^s_{p,r}}d\tau,
\]
or
\begin{equation}\label{2.2}
\|f\|_{B^s_{p,r}}\leq e^{CV(t)}(\|f_0\|_{B^s_{p,r}}+\int^t_0e^{-CV(\tau)}\|F(\tau)\|_{B^s_{p,r}}d\tau)
\end{equation}
holds, where $ V(t)=\int^t_0\|v_x(\tau)\|_{B^{1/p}_{p,r}\cap L^{\infty}}d\tau$ if $s<1+\frac{1}{p}$ and $ V(t)=\int^t_0\|v_x(\tau)\|_{B^{s-1}_{p,r}}d\tau$ else.

\noindent(2) If $s\leq 1+\frac{1}{p}$, $f'_0\in L^{\infty}$, $f_x \in L^{\infty}((0,t)\times \mathbb{R})$ and $F_x \in L^{1}(0,T;L^{\infty})$, then
\begin{align}
\|&f(t)\|_{B^s_{p,r}}+\|f_x(t)\|_{L^{\infty}}\nonumber \\
&\leq e^{CV(t)}(\|f_0\|_{B^s_{p,r}}+\|f_{0x}\|_{L^{\infty}}
+\int^t_0e^{-CV(\tau)}[\|F(\tau)\|_{B^s_{p,r}}+\|F_x(\tau)\|_{L^{\infty}}]d\tau)\nonumber
\end{align}

with $ V(t)=\int^t_0\|v_x(\tau)\|_{B^{1/p}_{p,r}\cap L^{\infty}}d\tau$.

\noindent(3) If $f=v$, then for all $s>0$, the estimate \eqref{2.2} holds with $ V(t)=\int^t_0\|v_x(\tau)\|_{L^{\infty}}d\tau$.

\noindent(4) If $r<\infty$, then $f\in C([0,T];B^s_{p,r})$. If $r=\infty$, then $f\in C([0,T];B^{s'}_{p,r})$ for all $s'<s$.
\end{lemma}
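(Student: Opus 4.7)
The plan is to prove Lemma \ref{lem2.3} in the classical Danchin style: localize the transport equation with the Littlewood-Paley blocks, perform an $L^p$ energy estimate on each dyadic piece, control the commutator by Bony's decomposition, and close the argument by Gronwall after summing the $2^{qs}$-weighted estimates in $\ell^r$. First I would apply $\Delta_q$ to the equation in \eqref{T}, producing
\[
\partial_t \Delta_q f + v\,\partial_x \Delta_q f \;=\; \Delta_q F + [v\,\partial_x,\Delta_q] f,\qquad \Delta_q f\big|_{t=0}=\Delta_q f_0.
\]
Multiplying by $|\Delta_q f|^{p-2}\Delta_q f$, integrating in $x$, and using $\int v\,\partial_x(|\Delta_q f|^p)\,dx = -\int (\partial_x v)|\Delta_q f|^p\,dx$ yields
\[
\frac{d}{dt}\|\Delta_q f\|_{L^p} \;\leq\; \frac{1}{p}\|\partial_x v\|_{L^\infty}\|\Delta_q f\|_{L^p} + \|\Delta_q F\|_{L^p} + \|[v\,\partial_x,\Delta_q] f\|_{L^p}.
\]

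The decisive step is the commutator estimate. I would expand $v\,\partial_x f$ via Bony's paraproduct decomposition into $T_v\,\partial_x f + T_{\partial_x f}\,v + R(v,\partial_x f)$, then apply $\Delta_q$ and pair terms to reveal cancellations of the form $[T_v,\Delta_q]\partial_x f$ together with the non-pairing terms $\Delta_q(T_{\partial_x f}\,v)$ and $\Delta_q R(v,\partial_x f)$. Using Fourier localization (each dyadic block acts like convolution with $2^{qn}h(2^q\cdot)$) together with the mean value theorem for $[T_v,\Delta_q]$, one obtains the pointwise-in-$q$ bound
\[
2^{qs}\|[v\,\partial_x,\Delta_q] f\|_{L^p} \;\leq\; C\,c_q(t)\,V'(t)\,\|f(t)\|_{B^s_{p,r}},\qquad \|(c_q)\|_{\ell^r}\leq 1,
\]
with $V'(t)$ chosen as $\|\partial_x v\|_{B^{1/p}_{p,r}\cap L^\infty}$ when $s<1+1/p$ and as $\|\partial_x v\|_{B^{s-1}_{p,r}}$ otherwise; the hypothesis $s>-\min(1/p,1-1/p)$ is precisely what allows the remainder $R(v,\partial_x f)$ to be summed, and the case $s=1+1/p$ is the endpoint obstruction that forces the assumption $r=1$ or $s\neq 1+1/p$.

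Multiplying the $\Delta_q$ energy inequality by $2^{qs}$, taking the $\ell^r$ norm, and invoking Gronwall then yields the integral inequality in (1) and the equivalent exponential form \eqref{2.2}. For part (2) I would differentiate \eqref{T} to obtain a transport equation for $f_x$ whose forcing involves $F_x$ and $(\partial_x v) f_x$, then apply the $L^\infty$ version of the transport estimate (characteristics give a clean bound) and add it to the $B^s_{p,r}$ bound. Part (3) follows because when $f=v$ the quantity $\|\partial_x v\|_{L^\infty}$ already controls the commutator for $s>0$. Part (4) is obtained by approximating $f_0$ and $F$ by smooth data, for which the solution is classical hence continuous in $B^s_{p,r}$, and then passing to the limit in the strong topology when $r<\infty$ and in $B^{s'}_{p,r}$ for $s'<s$ when $r=\infty$.

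I expect the main obstacle to be the commutator estimate in the borderline range, particularly keeping track of which paraproduct/remainder pieces demand $V'$ to live in $B^{1/p}_{p,r}\cap L^\infty$ versus $B^{s-1}_{p,r}$, and verifying that the endpoint $s=1+1/p$ genuinely fails unless $r=1$. The remaining ingredients (energy identity, Gronwall, approximation for continuity in time) are routine once the commutator bound is in hand.
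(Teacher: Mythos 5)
The paper does not prove Lemma \ref{lem2.3}: it is stated in the preliminaries and cited from the references (Danchin's Camassa--Holm papers and the Bahouri--Chemin--Danchin monograph). Your sketch reproduces exactly the standard argument from those sources --- dyadic localization, $L^p$ energy estimate on each block, the Bony-decomposition commutator bound $2^{qs}\|[v\,\partial_x,\Delta_q]f\|_{L^p}\leq Cc_q V'\|f\|_{B^s_{p,r}}$, and Gronwall --- and correctly identifies where the hypotheses $s>-\min(\frac1p,1-\frac1p)$ and ``$r=1$ or $s\neq 1+\frac1p$'' enter, so it is the same approach and is correct (modulo the routine caveat that the $L^p$ integration by parts needs a separate limiting or characteristics argument when $p=\infty$).
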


\begin{lemma}\label{lem2.4}(Existence and uniqueness) Let $p,r,s,f_0$ and $F$ be as in the statement of Lemma \ref{lem2.3}. Assume that $v\in L^{\rho}(0,t;B^{-M}_{\infty,\infty})$ for some $\rho>1$ and $M>0$ and $v_x\in L^{1}(0,T;B^{s-1}_{p,r})$ if $s>1+\frac{1}{p}$ or $s=1+\frac{1}{p}$ and $r=1$ and $v_x\in L^{1}(0,T;B^{1/p}_{p,\infty}\cap L^{\infty})$ if $<1+\frac{1}{p}$. Then the transport equation (T) has a unique solution $ f\in L^{\infty}([0,T];B^s_{p,r})\cap (\bigcap _{s'<s}C([0,T];B^{s'}_{p,1})$ and the inequalities in Lemma \ref{lem2.3} hold true. Moreover, if $r<\infty$, then we have $f\in C([0,T];B^s_{p,r})$.
\end{lemma}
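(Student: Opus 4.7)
The strategy is classical: construct smooth approximate solutions for which the transport equation can be solved by the method of characteristics, use the a priori estimates of Lemma \ref{lem2.3} to extract a limit, and exploit linearity for uniqueness. To begin, I would regularize the data by setting $v_n = S_n v$, $F_n = S_n F$, $f_{0,n} = S_n f_0$ via the low-frequency cutoffs of Remark \ref{rem2.1}. For each fixed $n$, the velocity $v_n$ is smooth in space with bounded derivatives, so the corresponding transport problem admits a classical solution $f_n \in C([0,T]; C^\infty)$ obtained by integration along the flow of $v_n$.

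Next I would apply Lemma \ref{lem2.3} to each $f_n$: since $\|\p_x v_n\|_{L^1(0,T;B^{s-1}_{p,r})} \leq \|\p_x v\|_{L^1(0,T;B^{s-1}_{p,r})}$ (and similarly for the $B^{1/p}_{p,\infty}\cap L^\infty$ alternative in the sub-critical range), the resulting bound in $L^\infty(0,T;B^s_{p,r})$ is uniform in $n$. To get convergence, I would look at the difference $w_{n,m} = f_n - f_m$, which solves a transport equation with velocity $v_n$, initial datum $f_{0,n}-f_{0,m}$ and source $F_n - F_m + (v_m - v_n)\p_x f_m$; estimating $w_{n,m}$ in the weaker norm $B^{s-1}_{p,r}$ via Lemma \ref{lem2.3}(1) shows that $(f_n)$ is Cauchy in $L^\infty(0,T;B^{s-1}_{p,r})$ and thus converges to some $f$.

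The uniform bound together with the Fatou property (Lemma \ref{lem2.2}(8)) places $f$ in $L^\infty(0,T;B^s_{p,r})$ with the estimates of Lemma \ref{lem2.3}, and passing to the limit in each term of \eqref{T} identifies $f$ as a solution. For the time regularity, $\p_t f = F - v\,\p_x f \in L^1(0,T;B^{s-1}_{p,r})$ under the hypotheses, so $f\in C([0,T];B^{s-1}_{p,r})$; interpolating with the $L^\infty(0,T;B^s_{p,r})$ bound via Lemma \ref{lem2.2}(6) yields $f\in C([0,T];B^{s'}_{p,1})$ for all $s'<s$. When $r<\infty$, a density argument based on Lemma \ref{lem2.2}(2)---approximate by data in $C^\infty_c$ and pass to the limit through the previously established Cauchy estimate---lifts this to $f\in C([0,T];B^s_{p,r})$. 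Uniqueness follows by applying Lemma \ref{lem2.3}(1) to the difference of two solutions, which solves the homogeneous transport equation with zero initial data.

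The main obstacle will be Step 2 at the critical thresholds $s = 1 + 1/p$ with $r=1$ and at the low-regularity limit $s > -\min(1/p,\,1-1/p)$: in both regimes the term $v_n\,\p_x f_m$ sits at the borderline allowed by Lemma \ref{lem2.3}, so the estimate must invoke the $B^{1/p}_{p,\infty}\cap L^\infty$ control on $\p_x v$ (or the $B^{s-1}_{p,r}$ control on the other side of the threshold) rather than a naive $L^\infty$ bound, and one must select the correct alternative in Lemma \ref{lem2.3} depending on which side of $1+1/p$ the exponent $s$ lies. All remaining steps are routine once the approximating sequence and its uniform estimates are in hand.
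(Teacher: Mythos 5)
First, note that the paper itself offers no proof of Lemma \ref{lem2.4}: it is one of the recalled facts of Section 2, quoted from the transport-equation theory of Danchin and Bahouri--Chemin--Danchin \cite{RD4,RD2,RD3}, so the comparison here is with the standard proof in those references. Your overall architecture --- regularize the data and the velocity with the cut-offs $S_n$, solve the smoothed problems along the flow, apply the a priori estimates of Lemma \ref{lem2.3} uniformly in $n$, pass to the limit, invoke the Fatou property, and obtain uniqueness by applying Lemma \ref{lem2.3}(1) at level $s$ to the difference of two solutions --- is exactly that standard scheme.

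The genuine gap is in your convergence step. You propose to show that $(f_n)$ is Cauchy in $L^\infty(0,T;B^{s-1}_{p,r})$ by applying Lemma \ref{lem2.3} to $w_{n,m}=f_n-f_m$ with source $F_n-F_m+(v_m-v_n)\partial_x f_m$. This fails in the stated generality for two reasons. First, Lemma \ref{lem2.3} requires the regularity index to exceed $-\min(1/p,1-1/p)$, and $s-1$ violates this whenever $s\le 1-\min(1/p,1-1/p)$, a range the lemma explicitly covers. Second, and more seriously, to make the extra source term small you need $(v_m-v_n)\partial_x f_m\to 0$ in $L^1(0,T;B^{s-1}_{p,r})$; this requires a product law at regularity $s-1$ together with strong convergence of $S_nv$ to $v$ in a norm of sufficiently high index, whereas the hypothesis only provides $v\in L^{\rho}(0,T;B^{-M}_{\infty,\infty})$, i.e.\ convergence of $S_nv$ in very negative-index spaces that no product law can compensate. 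The standard proof avoids the Cauchy argument altogether: from the uniform bound in $L^\infty(0,T;B^s_{p,r})$ and the equation one gets equicontinuity in time of $t\mapsto\langle f_n(t),\phi\rangle$ for test functions $\phi$, then Ascoli's theorem plus a diagonal extraction produces a weak-$*$ limit $f$, the Fatou property (Lemma \ref{lem2.2}(8)) places $f$ in $L^\infty(0,T;B^s_{p,r})$, and one passes to the limit in (T) in the sense of distributions. Your remaining steps (time continuity for $r<\infty$, uniqueness via Gronwall at level $s$) are sound. It is worth adding that in the only situations where the present paper invokes the lemma ($p=2$, $r=1$, $s\ge 3/2$, velocity $1+u^{(n)}$ bounded in $B^{s_0}_{2,1}$) your Cauchy argument would in fact go through; it is the general statement that it cannot reach.
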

\begin{lemma}\label{lem2.5}(\cite{RD2}) Denote $\overline{\mathbb{N}}=\mathbb{N}\cup\infty$. Let $(v^{(n)})_{n\in \overline{\mathbb{N}}}$ be a sequence of functions belonging to $C([0,T];B^{1/2}_{2,1})$. Assume that $v^{(n)}$ is the solution to
\begin{equation}\label{2.4}
\begin{cases}
\partial_tv^{(n)}+a^{(n)}\partial_xv^{(n)}=f,\\
v^{(n)}|_{t=0}=v_0
\end{cases}
\end{equation}
with $v_0 \in B^{1/2}_{2,1}$, $f\in L^1(0,T;B^{1/2}_{2,1})$ and that for some $\alpha  \in L^1(0,T)$,
\[
\sup_{n\in\mathbb{N}}\|\partial_xa^{(n)}(t)\|_{B^{1/2}_{2,1}}\leq \alpha(t).
\]
If in addition $a^{(n)}$ tends to $a^{(\infty)}$ in $L^1(0,T;B^{1/2}_{2,1})$ then $v^{(n)}$ tends to $v^{(\infty)}$ in $C(0,T;B^{1/2}_{2,1})$.
\end{lemma}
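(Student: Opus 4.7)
The plan is to exploit the linearity of the transport equation \eqref{2.4} together with a smoothing-of-the-data argument that sidesteps a critical endpoint product obstruction. Setting $w^{(n)}:=v^{(n)}-v^{(\infty)}$, one immediately checks that $w^{(n)}$ solves a transport equation of the same type, with coefficient $a^{(n)}$, zero initial datum, and source
\[
F^{(n)}:=(a^{(\infty)}-a^{(n)})\,\partial_x v^{(\infty)}.
\]
A direct application of Lemma \ref{lem2.3}(1) at regularity $B^{1/2}_{2,1}$ would finish the proof if $F^{(n)}$ could be controlled there; however $\partial_x v^{(\infty)}$ lies only in $B^{-1/2}_{2,1}$, and the Moser estimate Lemma \ref{lem2.2}(5)(ii) with $p=2$, $r=1$ requires $s_1+s_2>0$, which fails exactly for the pair $(-1/2,1/2)$. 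This endpoint failure is the real obstacle of the lemma and forces the smoothing step below.

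To bypass it, I would regularise the data: set $v_0^N:=S_N v_0$ and $f^N:=S_N f$, which are frequency-localised and hence lie in $B^s_{2,1}$ for every $s\in\R$, while the tail-sum characterisation of the $B^{1/2}_{2,1}$ norm gives $v_0^N\to v_0$ in $B^{1/2}_{2,1}$ and $f^N\to f$ in $L^1(0,T;B^{1/2}_{2,1})$ as $N\to\infty$. Let $v^{(n),N}$ denote the unique solution (Lemma \ref{lem2.4}) of the transport equation with coefficient $a^{(n)}$ and data $(v_0^N,f^N)$. Using $B^{1/2}_{2,1}\hookrightarrow L^\infty$ (Lemma \ref{lem2.2}(4)) and the hypothesis on $\partial_x a^{(n)}$, the quantity $V^{(n)}(T)$ appearing in \eqref{2.2} is dominated by $C\|\alpha\|_{L^1(0,T)}$ uniformly in $n$. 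Applying \eqref{2.2} at regularity $s=1/2$ to $v^{(n)}-v^{(n),N}$ --- which solves the transport equation for $a^{(n)}$ with source $f-f^N$ and datum $v_0-v_0^N$ --- yields
\[
\|v^{(n)}-v^{(n),N}\|_{L^\infty(0,T;B^{1/2}_{2,1})}\leq e^{C\|\alpha\|_{L^1}}\bigl(\|v_0-v_0^N\|_{B^{1/2}_{2,1}}+\|f-f^N\|_{L^1(0,T;B^{1/2}_{2,1})}\bigr),
\]
a bound uniform in $n\in\overline{\mathbb{N}}$ that tends to $0$ as $N\to\infty$.

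For fixed $N$, Lemma \ref{lem2.4} applied at the critical level $s=1+1/p=3/2$ with $r=1$ --- the level exactly matched by $\partial_x a^{(n)}\in L^1(0,T;B^{1/2}_{2,1})$ --- produces $v^{(n),N}\in C([0,T];B^{3/2}_{2,1})$ with norms bounded uniformly in $n$. Consequently $\partial_x v^{(\infty),N}\in C([0,T];B^{1/2}_{2,1})$, and since $B^{1/2}_{2,1}$ is a Banach algebra (Lemma \ref{lem2.2}(4), the case $s=1/p$, $r=1$) we have
\[
\|(a^{(\infty)}-a^{(n)})\,\partial_x v^{(\infty),N}\|_{B^{1/2}_{2,1}}\leq C\|a^{(\infty)}-a^{(n)}\|_{B^{1/2}_{2,1}}\|v^{(\infty),N}\|_{B^{3/2}_{2,1}}.
\]
Plugging this into \eqref{2.2} applied to $w^{(n),N}:=v^{(n),N}-v^{(\infty),N}$, which has zero initial datum, yields
\[
\|w^{(n),N}\|_{L^\infty(0,T;B^{1/2}_{2,1})}\leq C_N\,\|a^{(\infty)}-a^{(n)}\|_{L^1(0,T;B^{1/2}_{2,1})}\longrightarrow 0\quad\text{as }n\to\infty.
\]

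Finally, the triangle decomposition
\[
v^{(n)}-v^{(\infty)}=\bigl(v^{(n)}-v^{(n),N}\bigr)+w^{(n),N}+\bigl(v^{(\infty),N}-v^{(\infty)}\bigr)
\]
closes the argument: given $\eta>0$, choose $N$ so large that both the first and the third summands have $L^\infty_tB^{1/2}_{2,1}$-norm less than $\eta/3$ (the uniform-in-$n$ bound of the preceding paragraph covers both, taking $n$ finite for the first summand and $n=\infty$ for the third), and then pick $n$ large enough that $\|w^{(n),N}\|<\eta/3$. The upgrade from $L^\infty_t$ to $C_t$ follows from Lemma \ref{lem2.3}(4) since $r=1$. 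The single non-routine point throughout is the critical-index product failure identified at the outset, and the smoothing manoeuvre is precisely what converts it into a routine argument at the expense of introducing the auxiliary regularised solution $v^{(n),N}$.
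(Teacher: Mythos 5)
The paper does not prove Lemma \ref{lem2.5} at all; it is quoted verbatim from Danchin \cite{RD2}. Your argument is correct and is essentially the proof given in that reference: the same identification of the critical product obstruction for $(a^{(\infty)}-a^{(n)})\partial_x v^{(\infty)}$ at the endpoint $B^{1/2}_{2,1}\times B^{-1/2}_{2,1}$, the same smoothing of the data by $S_N$ so that the auxiliary solutions live in $B^{3/2}_{2,1}$ where the algebra property of $B^{1/2}_{2,1}$ applies to the source term, and the same uniform-in-$n$ triangle argument combined with Lemma \ref{lem2.3} to pass from $L^\infty_t$ to $C_t$.
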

\section{Commutator estimates}
This section is devoted to the vital commutator estimates needed in the proof of Theorem \ref{the1.1} and Theorem \ref{the1.2}. To begin with, we give the definition of the Bony decomposition.
\begin{definition}\cite{RD4}\label{def} The nonhomogeneous paraproduct of $v$ by $u$ is defined by
\begin{equation}\label{def1}
T_uv\overset{def}{=}\sum_jS_{j-1}u\Delta_jv,
\end{equation}
and the nonhomogeneous remainder of $u$ and $v$ is defined by
\begin{equation}\label{def2}
R(u,v)=\sum_{|k-j|\leq 1}\Delta_ku\Delta_jv.
\end{equation}
Then the Bony decomposition is given by
\begin{equation}\label{def3}
uv=T_uv+T_vu+R(u,v).
\end{equation}
or
\begin{equation}\label{def4}
uv=T_uv+T'_vu\quad \text{with} \quad
T'_vu\overset{def}{=}\sum_jS_{j+2}v\Delta_ju.
\end{equation}
\end{definition}
The required properties which will be used in the proof the commutator estimates are given by the following lemmas.
\begin{lemma}\cite{RD4}\label{com}
Let $f$ be a smooth function on $\mr$. Assume that $f$ is homogeneous of degree $m$ away from a neighborhood of 0.
Let $\rho$ be in $(0, 1)$, $s$ be in $\mr$, and $(p, r)$ be in $[1,\infty]^2$. There exists a constant $C$, depending only on $s$ and $\rho$, such that if $(p_1,p_2)\in [1,\infty]^2$ satisfies $\frac 1 p=\frac 1 {p_1}+\frac 1 {p_2}$, then the following estimate holds true:
\begin{equation}\label{c1}
\|[T_a,f(D)]u\|_{B^{s-m+\rho}_{p,r}}
\leq C\|\p_x a\|_{B^{\rho-1}_{p_1,\infty}}\|u\|_{B^{s}_{p_2,r}}.
\end{equation}
In the limit case $\rho = 1$, we have
\begin{equation}\label{c2}
\|[T_a,f(D)]u\|_{B^{s-m+1}_{p,r}}
\leq C\|\p_x a\|_{L^{p_1}}\|u\|_{B^{s}_{p_2,r}}.
\end{equation}
\end{lemma}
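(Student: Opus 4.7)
The plan is to unfold the paraproduct in the commutator and reduce the estimate to a kernel bound for the operator $f(D)$ localized in frequency. Set $g_{j}\overset{def}{=}\mathscr{F}^{-1}\bigl(f(\xi)\widetilde{\phi}(2^{-j}\xi)\bigr)$ where $\widetilde{\phi}$ is a smooth function equal to one on the ring $C$ and supported in a slightly larger ring, so that $f(D)\Delta_{j}u=g_{j}\ast\Delta_{j}u$ for every $j\geq 0$. Because $f$ is homogeneous of degree $m$ away from $0$, a standard rescaling argument gives, for each $k\in\mathbb{N}$,
\begin{equation*}
\|\,|\cdot|^{k}g_{j}\|_{L^{1}}\leq C_{k}\,2^{j(m-k)},\qquad j\geq 0,
\end{equation*}
and for $j=-1$ (the low frequency block) $f(D)\Delta_{-1}$ is a convolution with a Schwartz function, which is harmless.

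Next I would use the spectral localisation property recalled in Remark \ref{rem2.1} to write
\begin{equation*}
\Delta_{q}[T_{a},f(D)]u\;=\;\sum_{|q-j|\leq 4}\Delta_{q}\bigl[S_{j-1}a,f(D)\bigr]\Delta_{j}u,
\end{equation*}
and then expand each inner piece via the kernel representation and a first order Taylor expansion:
\begin{equation*}
\bigl[S_{j-1}a,f(D)\bigr]\Delta_{j}u(x)=\int_{\mr}g_{j}(x-y)(x-y)\!\int_{0}^{1}\!\partial_{x}S_{j-1}a\bigl(y+\tau(x-y)\bigr)d\tau\,\Delta_{j}u(y)\,dy.
\end{equation*}
Young's inequality with $1/p=1/p_{1}+1/p_{2}$ and the kernel bound above then give
\begin{equation*}
\bigl\|[S_{j-1}a,f(D)]\Delta_{j}u\bigr\|_{L^{p}}\leq C\,2^{j(m-1)}\|\partial_{x}S_{j-1}a\|_{L^{p_{1}}}\|\Delta_{j}u\|_{L^{p_{2}}}.
\end{equation*}
This is the analytic core of the lemma; it is the step I expect to require the most care, because the kernel bound depends sensitively on the $C^{k}$ behaviour of $f$ near the origin and on the homogeneity index $m$, and one has to justify the interchange of the $\tau$-integration with the convolution.

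To finish \eqref{c1}, for $\rho\in(0,1)$ the low-frequency sum $\partial_{x}S_{j-1}a=\sum_{k\leq j-2}\partial_{x}\Delta_{k}a$ together with the definition of $B^{\rho-1}_{p_{1},\infty}$ gives $\|\partial_{x}\Delta_{k}a\|_{L^{p_{1}}}\leq 2^{k(1-\rho)}\|\partial_{x}a\|_{B^{\rho-1}_{p_{1},\infty}}$, which sums (because $1-\rho>0$) to $C\,2^{j(1-\rho)}\|\partial_{x}a\|_{B^{\rho-1}_{p_{1},\infty}}$. Multiplying by $2^{j(s-m+\rho)}$ absorbs both the $2^{j(m-1)}$ from the kernel and the $2^{j(1-\rho)}$ from the low-frequency sum, leaving exactly $2^{js}\|\Delta_{j}u\|_{L^{p_{2}}}$, on which one takes the $\ell^{r}$ norm. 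For the limit case $\rho=1$, the same computation is cleaner: I would bound $\|\partial_{x}S_{j-1}a\|_{L^{p_{1}}}\leq\|\partial_{x}a\|_{L^{p_{1}}}$ directly, so the kernel factor $2^{j(m-1)}$ is compensated by the Besov weight $2^{j(s-m+1)}$ to yield $2^{js}\|\Delta_{j}u\|_{L^{p_{2}}}$, and summing in $\ell^{r}$ yields \eqref{c2}. A last routine check is that the boundary indices $j=-1,0$ contribute at worst a constant factor, so that the bound propagates from the high-frequency sum to the full Besov norm; the summation in $q$ with $|q-j|\leq 4$ merely gives an extra $\ell^{r}$-shift that costs nothing.
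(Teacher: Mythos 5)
The paper does not prove this lemma; it is quoted verbatim from \cite{RD4}, and your argument is essentially the standard proof given there (first-order Taylor expansion of $S_{j-1}a(x)-S_{j-1}a(y)$ against the rescaled kernel $g_j$ of $f(D)$ on the annulus, the bound $\|\,|\cdot|\,g_j\|_{L^1}\lesssim 2^{j(m-1)}$, and the two ways of summing $\partial_xS_{j-1}a=\sum_{k\le j-2}\partial_x\Delta_k a$ according to whether $\rho<1$ or $\rho=1$), so it is correct and there is nothing to add in substance. The one point to state more carefully: for the identity $f(D)\bigl(S_{j-1}a\,\Delta_ju\bigr)=g_j*\bigl(S_{j-1}a\,\Delta_ju\bigr)$ you need $\widetilde\phi(2^{-j}\cdot)\equiv 1$ not just on the support of $\widehat{\Delta_ju}$ but on the (larger, still origin-avoiding) annulus containing the spectrum of the product $S_{j-1}a\,\Delta_ju$, so $\widetilde\phi$ should be taken equal to $1$ on a ring such as $\{\tfrac23\le|\xi|\le\tfrac{10}{3}\}$ rather than only on $C$.
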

\begin{lemma}\cite{RD4}\label{est-T}
There exists a constant $C>0$, such that for any $s\in\mr,\sigma>0$ and $1\leq p,r_1,r_2\leq \infty$, it holds that
\begin{equation}\label{est-T1}
\|T_uv\|_{B^{s}_{p,r}}
\leq C^{|s|+1}\|u\|_{L^{\infty}}\|v\|_{B^{s}_{p,r}},
\end{equation}
and
\begin{equation}\label{est-T2}
\|T_uv\|_{B^{s-\sigma}_{p,r_{1,2}}}
\leq \frac {C^{|s-\sigma|+1}}{\sigma}\|u\|_{B^{-\sigma}_{\infty,r_{1}}}\|v\|_{B^{s}_{p,r_{2}}},\quad \text{with}\;\;
\frac {1}{r_{1,2}}
=\min \left(1, \frac 1 {r_{1}}+\frac 1 {r_{2}}\right).
\end{equation}
\end{lemma}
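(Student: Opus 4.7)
The plan is to exploit the spectral localization of each building block $S_{j-1}u\,\Delta_j v$ of the paraproduct, together with a discrete convolution estimate in the frequency index. The starting observation is that $S_{j-1}u$ has Fourier support in a ball of radius $\sim 2^{j-1}$ and $\Delta_j v$ in an annulus of radius $\sim 2^j$, so $S_{j-1}u\,\Delta_j v$ has Fourier support in an annulus of comparable radius. Consequently $\Delta_q(S_{j-1}u\,\Delta_j v)=0$ unless $|q-j|\le N$ for a fixed integer $N$ (one can take $N=4$), which reduces the analysis of $\Delta_q(T_uv)$ to a finite sum in $j$ around each $q$.

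For the first estimate I would bound $\|S_{j-1}u\|_{L^\infty}\le C\|u\|_{L^\infty}$ directly (Young's inequality applied to the convolution with the low-frequency cut-off kernel, as noted in Remark \ref{rem2.1}), so that
\[
\|\Delta_q T_uv\|_{L^p}\le C\|u\|_{L^\infty}\sum_{|j-q|\le N}\|\Delta_j v\|_{L^p}.
\]
Multiplying by $2^{qs}$, writing $2^{qs}=2^{(q-j)s}\,2^{js}$, and using $|q-j|\le N$ to absorb $2^{(q-j)s}$ into a constant of the form $C^{|s|}$, the $\ell^r_q$ norm of the right-hand side is dominated by $\|v\|_{B^s_{p,r}}$ (the finite sum in $j$ preserves the $\ell^r$ norm up to a multiplicative constant), which yields \eqref{est-T1}.

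For the second estimate the delicate step is to control $\|S_{j-1}u\|_{L^\infty}$ by the \emph{negative-regularity} norm $\|u\|_{B^{-\sigma}_{\infty,r_1}}$. Setting $a_k:=2^{-k\sigma}\|\Delta_k u\|_{L^\infty}$, so that $\|(a_k)\|_{\ell^{r_1}}=\|u\|_{B^{-\sigma}_{\infty,r_1}}$, one writes
\[
2^{-j\sigma}\|S_{j-1}u\|_{L^\infty}\le C\sum_{k\le j-2}2^{(k-j)\sigma}a_k=C(\tilde b*a)_j,\qquad \tilde b_\ell:=2^{-\ell\sigma}\mathbf{1}_{\ell\ge 2}.
\]
The crucial geometric sum gives $\|\tilde b\|_{\ell^1}=\frac{2^{-2\sigma}}{1-2^{-\sigma}}\le C/\sigma$, precisely because $\sigma>0$, and Young's inequality then yields $\alpha_j:=2^{-j\sigma}\|S_{j-1}u\|_{L^\infty}\in\ell^{r_1}$ with norm $\le(C/\sigma)\|u\|_{B^{-\sigma}_{\infty,r_1}}$. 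With $\beta_j:=2^{js}\|\Delta_j v\|_{L^p}\in\ell^{r_2}$, the spectral localization gives
\[
2^{q(s-\sigma)}\|\Delta_q T_uv\|_{L^p}\le C^{|s-\sigma|+1}\sum_{|j-q|\le N}\alpha_j\beta_j.
\]
A Hölder step \emph{pointwise in} $j$ places $\alpha\beta$ in $\ell^{r_{1,2}}$ with $\tfrac{1}{r_{1,2}}=\min\!\bigl(1,\tfrac{1}{r_1}+\tfrac{1}{r_2}\bigr)$, and the finite convolution in $q$ preserves this norm, giving \eqref{est-T2}.

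The main obstacle is tracking the $\sigma$-dependence sharply as $\sigma\to 0^+$: the $1/\sigma$ factor is not an artifact of the argument but stems directly from the $\ell^1$ norm of the kernel $\tilde b$, and any approach that estimates $\|S_{j-1}u\|_{L^\infty}$ by cruder means will lose this clean form. A secondary subtlety is the coupling of $r_1$ and $r_2$ through $r_{1,2}$; the correct summation index arises precisely because the product $\alpha_j\beta_j$ is estimated pointwise \emph{before} summing in $j$, which forces the $\min(1,\cdot)$ clipping when $\tfrac{1}{r_1}+\tfrac{1}{r_2}>1$.
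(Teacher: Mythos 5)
Your argument is correct, and it is essentially the standard proof of these paraproduct continuity estimates from the cited source \cite{RD4} (the paper itself states this lemma by citation, without proof): the reduction via the spectral localization $\Delta_q(S_{j-1}u\,\Delta_j v)=0$ for $|q-j|\geq 5$, the bound $\|S_{j-1}u\|_{L^\infty}\leq C\|u\|_{L^\infty}$ for \eqref{est-T1}, and the geometric-sum/Young-inequality origin of the $1/\sigma$ factor together with the pointwise H\"older step producing the exponent $r_{1,2}$ for \eqref{est-T2} are all exactly as in that reference.
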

\begin{lemma}\cite{RD3,MWZ}\label{est-R}
Let $s_1,s_2\in\mr$ and $1\leq p,p_1,p_2,r_1,r_2\leq \infty$. Assume that
\begin{equation}\label{est-R1}
\frac 1 p \leq \frac 1 {p_1}+\frac 1 {p_2}\leq 1,
\quad
\sigma_{1,2}-\frac {d} {p}=\left(s_1-\frac d {p_1}\right)+\left(s_2-\frac d {p_2}\right).
\end{equation}
Then there exists a constant $C>0$ such that

(1) If
\begin{equation}\label{est-R2}
s_1+s_2>0,\quad \frac 1 {r_1}+\frac 1 {r_2}\overset{def}{=}\frac 1 r\leq 1,
\end{equation}
then we have
\begin{equation}\label{est-R3}
\|R(u,v)\|_{B^{\sigma_{1,2}}_{p,r}}
\leq \frac{C^{|s_1+s_2|+1}}{s_1+s_2} \|u\|_{B^{s_1}_{p_1,r_1}}\|v\|_{B^{s_2}_{p_2,r_2}}.
\end{equation}

(2) If
\begin{equation}\label{est-R4}
s_1+s_2=0,\quad \frac 1 {r_1}+\frac 1 {r_2}=1,
\end{equation}
then we have
\begin{equation}\label{est-R5}
\|R(u,v)\|_{B^{\sigma_{1,2}}_{p,\infty}}
\leq C^{|s_1+s_2|+1}\|u\|_{B^{s_1}_{p_1,r_1}}\|v\|_{B^{s_2}_{p_2,r_2}}.
\end{equation}
\end{lemma}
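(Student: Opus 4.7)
The plan is to exploit the spectral localization built into $R(u,v)$ and reduce everything to a Hölder--Young argument on the Littlewood--Paley side.

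First, I would note that since $\Delta_k u\,\Delta_j v$ has Fourier support in a ball of radius $\sim 2^{\max(k,j)}$, the near-diagonal constraint $|k-j|\le 1$ in the definition (\ref{def2}) of $R(u,v)$ forces $\Delta_q R(u,v)$ to vanish unless some $j\ge q-N_0$, with $N_0$ an absolute constant. Writing $\widetilde{\Delta}_j:=\Delta_{j-1}+\Delta_j+\Delta_{j+1}$, this yields
$$
\Delta_q R(u,v)=\sum_{j\ge q-N_0}\Delta_q\!\bigl(\Delta_j u\,\widetilde{\Delta}_j v\bigr).
$$
I then apply Hölder with $\tfrac{1}{p_0}=\tfrac{1}{p_1}+\tfrac{1}{p_2}$ on each product and the generalized Bernstein inequality to pass from $L^{p_0}$ to $L^p$. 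Rewriting the scaling condition as $\sigma_{1,2}+d\bigl(\tfrac{1}{p_1}+\tfrac{1}{p_2}-\tfrac{1}{p}\bigr)=s_1+s_2$, I arrive at
$$
2^{q\sigma_{1,2}}\|\Delta_q R(u,v)\|_{L^p}\le C\sum_{j\ge q-N_0}2^{(q-j)(s_1+s_2)}\,c_j\,d_j,
$$
where $c_j:=2^{js_1}\|\Delta_j u\|_{L^{p_1}}\in\ell^{r_1}$ and $d_j:=2^{js_2}\|\widetilde{\Delta}_j v\|_{L^{p_2}}\in\ell^{r_2}$.

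From here the two cases are separated according to whether the geometric factor $\sum_{j\ge q-N_0}2^{(q-j)(s_1+s_2)}$ is summable. In case (1), where $s_1+s_2>0$, this series is bounded by $C/(s_1+s_2)$, so the sequence $a_k:=2^{-k(s_1+s_2)}\mathbf{1}_{k\ge -N_0}$ lies in $\ell^1$. Viewing the right-hand side as a discrete convolution $a\ast(cd)$ and invoking Young's inequality gives
$$
\bigl\|2^{q\sigma_{1,2}}\|\Delta_q R(u,v)\|_{L^p}\bigr\|_{\ell^r}\le \|a\|_{\ell^1}\,\|cd\|_{\ell^r},
$$
after which Hölder in the $j$-index with $\tfrac{1}{r_1}+\tfrac{1}{r_2}=\tfrac{1}{r}\le 1$ controls $\|cd\|_{\ell^r}$ by $\|c\|_{\ell^{r_1}}\|d\|_{\ell^{r_2}}$, yielding (\ref{est-R3}) with the announced $1/(s_1+s_2)$ dependence. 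In the endpoint case (2), where $s_1+s_2=0$, the geometric factor collapses to $1$ and $\ell^1$-summability in $j$ is lost; the compensating hypothesis $\tfrac{1}{r_1}+\tfrac{1}{r_2}=1$ then allows a pointwise-in-$q$ Hölder bound on $\sum_j c_j d_j$, producing the uniform-in-$q$ estimate (\ref{est-R5}).

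The main obstacle, and the reason the two regimes must be split, is precisely this summability: $\sum_{j\ge q-N_0}2^{(q-j)(s_1+s_2)}$ converges with norm $\sim 1/(s_1+s_2)$ only when $s_1+s_2>0$, which is exactly the source of the divergent constant in (\ref{est-R3}). The endpoint forces acceptance of an $\ell^\infty$ (rather than $\ell^r$) output, and the dual exponent condition $\tfrac{1}{r_1}+\tfrac{1}{r_2}=1$ is what keeps Hölder alive in that regime. Tracking the prefactor $C^{|s_1+s_2|+1}$ through the Bernstein step is then routine bookkeeping on dyadic annuli of width one.
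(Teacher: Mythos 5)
The paper states this lemma without proof, citing \cite{RD3,MWZ}, so there is no in-paper argument to compare yours against. Your proof is the standard one for the Bony remainder estimate and it is correct: the support condition $j\ge q-N_0$, the H\"older--Bernstein reduction, the geometric series summable with norm $\sim 1/(s_1+s_2)$ when $s_1+s_2>0$, and the dual-exponent H\"older bound with an $\ell^\infty$ output at the endpoint are exactly the ingredients of the cited references. One point worth making explicit, since your prose is ambiguous about it: to land on the factor $2^{(q-j)(s_1+s_2)}$ you must apply Bernstein to $\Delta_q\bigl(\Delta_j u\,\widetilde{\Delta}_j v\bigr)$, whose spectrum lives at scale $2^q$, rather than to the product $\Delta_j u\,\widetilde{\Delta}_j v$ itself at scale $2^j$; the latter choice would produce $2^{(q-j)\sigma_{1,2}}$ and force the stronger hypothesis $\sigma_{1,2}>0$, which would not cover all of the paper's applications. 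Your displayed inequality is the correct one, so the argument goes through as written.
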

Now we are in the position to give the desired commutator estimates.

\begin{lemma}\label{com-e}
There exists a constant $C>0$ such that
\begin{equation}\label{com-e1}
\|[f,(-\p_x^2)^\n]g\|_{B^{s_0-2\nu}_{2,1}}\leq C \|f\|_{B^{s_0}_{2,1}}\|g\|_{B^{s_0-1}_{2,1}} \quad \text{for} \;\; f\in B^{s_0}_{2,1},\; g\in B^{s_0-1}_{2,1},
\end{equation}
\begin{equation}\label{com-e2}
\|[f,(-\p_x^2)^\n]g\|_{B^{s_0-1-2\nu}_{2,\infty}}\leq C \|f\|_{B^{s_0-1}_{2,1}}\|g\|_{B^{s_0-1}_{2,1}} \quad \text{for} \;\; f\in B^{s_0-1}_{2,1},\; g\in B^{s_0-1}_{2,1},
\end{equation}
and
\begin{equation}\label{com-e3}
\|[f,(-\p_x^2)^\n]g\|_{B^{s_0-1-2\nu}_{2,\infty}}\leq C \|f\|_{B^{s_0}_{2,1}}\|g\|_{B^{s_0-2}_{2,1}} \quad \text{for} \;\; f\in B^{s_0}_{2,1},\; g\in B^{s_0-2}_{2,1},
\end{equation}
where $s_0=2\nu-\frac 1 2$ for $\nu>\frac 3 2 $ and $s_0=\frac 5 2$ for $1<\nu\leq \frac 3 2 $.
\end{lemma}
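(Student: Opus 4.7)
My plan is to treat all three estimates \eqref{com-e1}, \eqref{com-e2}, \eqref{com-e3} by a single Bony decomposition, then carry out case-by-case index bookkeeping. Applying Definition \ref{def} to both $f\cdot (-\partial_x^2)^{\nu}g$ and $(-\partial_x^2)^{\nu}(fg)$ and subtracting, I obtain the identity
\begin{align*}
[f,(-\p_x^2)^\n]g =\; & \bigl[T_f,(-\p_x^2)^\n\bigr]g + T_{(-\p_x^2)^\n g}f - (-\p_x^2)^\n T_g f \\
 & + R\bigl(f,(-\p_x^2)^\n g\bigr) - (-\p_x^2)^\n R(f,g).
\end{align*}
This decomposition transfers the problem to five standard building blocks, each addressed by a single tool from Section 2.

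For the first piece, I would invoke Lemma \ref{com} in the limit case $\rho=1$ with $m=2\nu$, $p_1=\infty$, $p_2=2$; this produces a one-derivative gain of the form $\|\partial_xf\|_{L^\infty}\|g\|_{B^{s}_{2,r}}$ with $s+1=s_0$ (for \eqref{com-e1} and \eqref{com-e2}) or $s+1=s_0-1$ (for \eqref{com-e3}). The $L^\infty$-norm of $\partial_x f$ is controlled by the Besov norm appearing on the right-hand side, using the embedding $B^{\sigma}_{2,1}\hookrightarrow L^\infty$ valid for $\sigma>\tfrac12$, and also for $\sigma=\tfrac12$ by item (4) of Lemma \ref{lem2.2}, which is precisely the marginal case $\nu=\tfrac32$, $s_0=\tfrac52$. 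For the paraproduct terms $T_{(-\p_x^2)^\n g}f$ and $(-\p_x^2)^\n T_g f$, I would combine the continuity of $(-\p_x^2)^\n$ as an $s^{2\nu}$-multiplier with Lemma \ref{est-T}: for the first I use \eqref{est-T2} (since $(-\p_x^2)^\n g$ has negative regularity, which is embedded into a $B^{-\sigma}_{\infty,\infty}$ space via Lemma \ref{lem2.2}(3)), while for the second I place $T_gf$ in $B^{s}_{2,r}$ by \eqref{est-T1} and then let $(-\p_x^2)^\n$ lower the index by $2\nu$. The two remainder terms are handled by Lemma \ref{est-R} with $p_1=p_2=2$, $r_1=r_2=1$, which gives the target regularity $\sigma_{1,2}=s_1+s_2-\tfrac12$ in every sub-case; the resulting Besov index always exceeds the index required on the left after embedding (3) of Lemma \ref{lem2.2}.

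The main technical obstacle is the index accounting at the boundary $\nu=\tfrac32$, where the two branches of the definition of $s_0$ meet and where the remainder $R\bigl(f,(-\p_x^2)^\n g\bigr)$ for \eqref{com-e2}--\eqref{com-e3} has $s_1+s_2=0$, so the strict positivity hypothesis \eqref{est-R2} of Lemma \ref{est-R} fails. In that borderline case I will instead apply part (2) of Lemma \ref{est-R} with $r_1=1$, $r_2=\infty$ (using the embedding $B^{s_0-1}_{2,1}\hookrightarrow B^{s_0-1}_{2,\infty}$ and its analogue for $g\in B^{s_0-2}_{2,1}$), which exactly delivers the $B^{\sigma_{1,2}}_{2,\infty}$ regularity that matches the target spaces in \eqref{com-e2} and \eqref{com-e3}.

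Finally, I would verify that for $\nu>\tfrac32$ the slack in each inequality is strictly positive (so the embeddings in Lemma \ref{lem2.2}(3) are admissible), and for $1<\nu\leq\tfrac32$ the same slack holds with the constant $s_0=\tfrac52$. Summing the five estimates then yields each of \eqref{com-e1}, \eqref{com-e2}, \eqref{com-e3} with the claimed products of norms. The only delicate algebraic check is that for each of the three statements the pair $(s_1+s_2,\,\sigma_{1,2})$ falls into the admissible region of Lemma \ref{est-R}, and that the $s^{2\nu}$-multiplier property used for the pieces involving $(-\p_x^2)^\n$ never forces us out of the Besov exponent ranges permitted by Lemma \ref{est-T}.
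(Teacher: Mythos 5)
Your proposal follows essentially the same route as the paper's proof: the identical Bony splitting (the paper merely groups $T_gf+R(f,g)$ into $T'_gf$), the limit-case commutator estimate \eqref{c2} with $\rho=1$ for $[T_f,(-\p_x^2)^\n]g$, the paraproduct bounds \eqref{est-T1}--\eqref{est-T2} for the two paraproduct terms, and the dichotomy between parts (1) and (2) of Lemma \ref{est-R} for $R(f,(-\p_x^2)^\n g)$ according to whether $s_1+s_2>0$ or $s_1+s_2=0$, which is precisely the paper's case split between $\nu>\frac 3 2$ and $s_0=\frac 5 2$. The one parameter slip is your choice $r_1=r_2=1$ for the remainders, which violates the hypothesis $\frac 1{r_1}+\frac 1{r_2}\leq 1$ in \eqref{est-R2}; embedding one factor into $\ell^{\infty}$ first (the paper uses $r_1=r_2=2$, resp. $r_1=1$, $r_2=\infty$) fixes this and the rest of the bookkeeping goes through as you describe.
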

\begin{proof}We write $[f,(-\p_x^2)^\n]g=F+G$ with
\begin{equation}\label{p1}
F\overset{def}{=}[T_f,(-\p_x^2)^\n]g+T_{(-\p_x^2)^\n g}f-(-\p_x^2)^\n T'_gf,
\end{equation}
and
\begin{equation}\label{p2}
G\overset{def}{=}R(f,(-\p_x^2)^\n g).
\end{equation}
First, we give the proof of \eqref{com-e1}. Taking $s=s_0-1, m=2\nu, p=2,r=1, p_1=\infty, p_2=2$ in \eqref{c2}, we have
\begin{align}\label{p3}
\|[T_f,(-\p_x^2)^\n]g\|_{B^{s_0-2\nu}_{2,1}}
&=\|[T_f,(-\p_x^2)^\n]g\|_{B^{s_0-1-2\nu+1}_{2,1}}\nonumber\\
&\leq C\|f_x\|_{L^{\infty}}\|g\|_{B^{s_0-1}_{2,1}}
\leq C \|f\|_{B^{s_0}_{2,1}}\|g\|_{B^{s_0-1}_{2,1}},
\end{align}
where the algebraic properties (4) in Lemma \ref{lem2.2} have been used. Taking $s=s_0, \sigma=2\nu, p=2, r_1=1, r_2=\infty$ in \eqref{est-T2}, we obtain
\begin{align}\label{p4}
\|T_{(-\p_x^2)^\n g}f\|_{B^{s_0-2\nu}_{2,1}}
&\leq C\|(-\p_x^2)^\n g\|_{B^{-2\nu}_{\infty,\infty}}\|f\|_{B^{s_0}_{2,1}}\nonumber\\
&\leq C\|g\|_{L^{\infty}}\|f\|_{B^{s_0}_{2,1}}
\leq C \|f\|_{B^{s_0}_{2,1}}\|g\|_{B^{s_0-1}_{2,1}},
\end{align}
where the property $L^\infty \hookrightarrow B^{0}_{\infty,\infty}$ has been used. By \eqref{est-T1} and \eqref{est-R3}, we get
\begin{align}\label{p5}
\|(-\p_x^2)^\n T'_gf\|_{B^{s_0-2\nu}_{2,1}}
\leq C\|T'_gf\|_{B^{s_0}_{2,1}}&\leq C\|T_gf\|_{B^{s_0}_{2,1}}+C\|R(f,g)\|_{B^{s_0}_{2,1}}\nonumber\\
&\leq C\|g\|_{L^{\infty}}\|f\|_{B^{s_0}_{2,1}}
\leq C \|f\|_{B^{s_0}_{2,1}}\|g\|_{B^{s_0-1}_{2,1}}.
\end{align}
Taking $s_1=s_0, s_2=-\frac 3 2, p=2, p_1=2, p_2=2, r=2, r_1=2, r_2=2, \sigma_{1,2}=s_0-2$ in \eqref{est-R3}, we obtain
\begin{align}\label{p6}
\|R(f,(-\p_x^2)^\n g)\|_{B^{s_0-2\nu}_{2,1}}
&\leq C\|R(f,(-\p_x^2)^\n g)\|_{B^{s_0-2}_{2,1}}\nonumber\\
&\leq C\|f\|_{B^{s_0}_{2,2}}\|(-\p_x^2)^\n g\|_{B^{-\frac 3 2}_{2,2}}
\leq C \|f\|_{B^{s_0}_{2,1}}\|g\|_{B^{s_0-1}_{2,1}}.
\end{align}
Combing \eqref{p1}-\eqref{p6} leads to \eqref{com-e1}. Now, we turn to the proof of \eqref{com-e2} and \eqref{com-e3}. Taking $s=s_0-2, m=2\nu, p=2,r=\infty, p_1=\infty, p_2=2$ in \eqref{c2}, we have
\begin{align}\label{p7}
\|[T_f,(-\p_x^2)^\n]g\|_{B^{s_0-1-2\nu}_{2,\infty}}
&=\|[T_f,(-\p_x^2)^\n]g\|_{B^{s_0-2-2\nu+1}_{2,\infty}}\nonumber\\
&\leq C\|f_x\|_{L^{\infty}}\|g\|_{B^{s_0-2}_{2,\infty}}
\leq C \|f\|_{B^{s_0-1}_{2,1}}\|g\|_{B^{s_0-2}_{2,1}}.
\end{align}
Taking $s=s_0-1, \sigma=2\nu, p=2, r_1=\infty, r_2=\infty$ in \eqref{est-T2}, we obtain
\begin{align}\label{p8}
\|T_{(-\p_x^2)^\n g}f\|_{B^{s_0-1-2\nu}_{2,\infty}}
&\leq C\|(-\p_x^2)^\n g\|_{B^{-2\nu}_{\infty,\infty}}\|f\|_{B^{s_0-1}_{2,\infty}}\nonumber\\
&\leq C\|g\|_{L^{\infty}}\|f\|_{B^{s_0-1}_{2,\infty}}
\leq C \|f\|_{B^{s_0-1}_{2,1}}\|g\|_{B^{s_0-2}_{2,1}}.
\end{align}
By \eqref{est-T1} and \eqref{est-R3}, we get
\begin{align}\label{p9}
\|(-\p_x^2)^\n T'_gf\|_{B^{s_0-1-2\nu}_{2,\infty}}
&\leq C\|T'_gf\|_{B^{s_0-1}_{2,\infty}}\nonumber\\
&\leq C\|g\|_{L^{\infty}}\|f\|_{B^{s_0-1}_{2,\infty}}
\leq C \|f\|_{B^{s_0-1}_{2,1}}\|g\|_{B^{s_0-2}_{2,1}}.
\end{align}
This completes the estimate of the term $F$, and we now give the estimate of the term $G$.

(1) Assume $\nu>\frac 3 2$ and thus $s_0=2\nu-\frac 1 2$.

(i) Taking $s_1=s_0-1, s_2=-\frac 3 2, p=2, p_1=2, p_2=2, r=\infty, r_1=\infty, r_2=\infty, \sigma_{1,2}=s_0-3$ in \eqref{est-R3}, we obtain
\begin{align}\label{p10}
\|R(f,(-\p_x^2)^\n g)\|_{B^{s_0-1-2\nu}_{2,\infty}}
&\leq C\|R(f,(-\p_x^2)^\n g)\|_{B^{s_0-3}_{2,\infty}}\nonumber\\
&\leq C\|f\|_{B^{s_0-1}_{2,\infty}}\|(-\p_x^2)^\n g\|_{B^{-\frac 3 2}_{2,\infty}}
\leq C \|f\|_{B^{s_0-1}_{2,1}}\|g\|_{B^{s_0-1}_{2,1}}.
\end{align}

(ii) Taking $s_1=s_0, s_2=-\frac 5 2, p=2, p_1=2, p_2=2, r_1=\infty, r_2=\infty, \sigma_{1,2}=s_0-3$ in \eqref{est-R3}, we obtain
\begin{align}\label{p11}
\|R(f,(-\p_x^2)^\n g)\|_{B^{s_0-1-2\nu}_{2,\infty}}
&\leq C\|R(f,(-\p_x^2)^\n g)\|_{B^{s_0-3}_{2,\infty}}\nonumber\\
&\leq C\|f\|_{B^{s_0}_{2,\infty}}\|(-\p_x^2)^\n g\|_{B^{-\frac 5 2}_{2,\infty}}
\leq C \|f\|_{B^{s_0}_{2,1}}\|g\|_{B^{s_0-2}_{2,1}}.
\end{align}

(2) Assume $1<\nu\leq\frac 3 2$ and thus $s_0=\frac 5 2$.

(i) Taking $s_1=\frac 3 2, s_2=-\frac 3 2, p=2, p_1=2, p_2=2, r_1=1, r_2=\infty, \sigma_{1,2}=-\frac 1 2$ in \eqref{est-R4}, we obtain
\begin{align}\label{p12}
\|R(f,(-\p_x^2)^\n g)\|_{B^{\frac 3 2-2\nu}_{2,\infty}}
&\leq C\|R(f,(-\p_x^2)^\n g)\|_{B^{-\frac 1 2}_{2,\infty}}\nonumber\\
&\leq C\|f\|_{B^{\frac 3 2}_{2,1}}\|(-\p_x^2)^\n g\|_{B^{-\frac 3 2}_{2,\infty}}
\leq C \|f\|_{B^{s_0-1}_{2,1}}\|g\|_{B^{s_0-1}_{2,1}}.
\end{align}

(ii) Taking $s_1=\frac 5 2, s_2=-\frac 5 2, p=2, p_1=2, p_2=2, r_1=1, r_2=\infty, \sigma_{1,2}=-\frac 1 2$ in \eqref{est-R4}, we obtain
\begin{align}\label{p13}
\|R(f,(-\p_x^2)^\n g)\|_{B^{\frac 3 2-2\nu}_{2,\infty}}
&\leq C\|R(f,(-\p_x^2)^\n g)\|_{B^{-\frac 1 2}_{2,\infty}}\nonumber\\
&\leq C\|f\|_{B^{\frac 5 2}_{2,1}}\|(-\p_x^2)^\n g\|_{B^{-\frac 5 2}_{2,\infty}}
\leq C \|f\|_{B^{s_0}_{2,1}}\|g\|_{B^{s_0-2}_{2,1}}.
\end{align}
Combing \eqref{p7}-\eqref{p9} with \eqref{p1}, \eqref{p2}, \eqref{p10}, \eqref{p12} yields \eqref{com-e2} and combing \eqref{p7}-\eqref{p9} with \eqref{p1}, \eqref{p2}, \eqref{p11}, \eqref{p13} yields \eqref{com-e3}. This completes the proof of Lemma \ref{com-e}.
\end{proof}

\section{Proof of Theorem \ref{the1.1}}
In this section we aim to prove Theorem \ref{the1.1} with the aid of the following six steps.

\vspace{3mm}

\emph{First step: Approximate solution}. We use a standard iterative process to build a solution. Starting from $u^{(0)}:=0$, by induction we define a sequence of smooth functions $(u^{(n)})_{n\in \mathbb{N}}$ by solving the following linear transport equation:
\begin{equation}\label{3.4}
\begin{cases}
u^{(n+1)}_t+\LC 1+u^{(n)}\RC u_x^{(n+1)}
=\p_xP(D)f_1(u^{(n)})+P(D)f_2(u^{(n)},u_x^{(n)}),\\
u^{(n+1)}(0,x)=u^{(n+1)}_0(x)=S_{n+1}u_0.\\
\end{cases}
\end{equation}
Since $S_{n+1}u_0$ belongs to $B^{\infty}_{2,r}$, by using Lemma 2.4, with the aid of induction, we show that for all $n\in \mathbb{N}$, the above equation has a global solution which belongs to $C(\mathbb{R}^{+}, B^{\infty}_{2,r})$.

\vspace{3mm}

\emph{Second step: Uniform bounds}. Applying \eqref{2.2} of Lemma \ref{lem2.3} to \eqref{3.4}, we obtain
\begin{align}\label{3.5}
&\|u^{(n+1)}(t)\|_{B^{s_0}_{2,1}}\leq e^{C\int^t_0\|u^{(n)}\|_{B^{s_0}_{2,1}}dt'}
\|u_0\|_{B^{s_0}_{2,1}}
\nonumber \\
&+\int^t_0 e^{C\int^t_\tau\|u^{(n)}\|_{B^{s_0}_{2,1}}dt'}
\|\p_xP(D)f_1(u^{(n)})+P(D)f_2(u^{(n)},u_x^{(n)})\|_{B^{s_0}_{2,1}}d\tau.
\end{align}
As $P(D)$ is a $S^{-2\n}$ -multiplier and $B^{s}_{2,r}\hookrightarrow B^{s-1}_{2,r}$, we have that
\begin{align}\label{3.6}
\|\p_xP(D)f_1(u^{(n)})\|_{B^{s_0}_{2,1}}&\leq \|f_1(u^{(n)})\|_{B^{s_0-2\n+1}_{2,1}}
\leq \|f_1(u^{(n)})\|_{B^{s_0-1}_{2,1}}\nonumber\\
&\leq \frac C 2\LC \|u^{(n)}\|_{B^{s_0}_{2,1}}+\|u^{(n)}\|^2_{B^{s_0}_{2,1}}\RC,
\end{align}
and by \eqref{com-e1} of lemma \ref{com-e} and $B^{1/2}_{2,1}\hookrightarrow L^{\infty}$, we have
\begin{align}\label{3.7}
\|P(D)f_2(u^{(n)},u_x^{(n)})\|_{B^{s_0}_{2,1}}
&\leq C\|[u^{(n)},(-\p_x^2)^\n]u^{(n)}_x\|_{B^{s_0-2\n}_{2,1}}
\leq \frac C 2\|u^{(n)}\|^2_{B^{s_0}_{2,1}}.
\end{align}
Inserting \eqref{3.6} and \eqref{3.7} into \eqref{3.5} yields for all $n\in \mathbb{N}$:
\begin{align}\label{3.8}
\|u^{(n+1)}(t)\|_{B^{s_0}_{2,1}}&\leq e^{C\int^t_0 \|u^{(n)}(t')\|_{B^{s_0}_{2,1}}dt'}
\|u_0\|_{B^{s_0}_{2,1}}\nonumber \\
&+\frac C 2 \int^t_0 e^{C\int^t_\tau  \|u^{(n)}(t')\|_{B^{s_0}_{2,1}}dt'}
\LC \|u^{(n)}\|_{B^{s_0}_{2,1}}+\|u^{(n)}\|^2_{B^{s_0}_{2,1}}\RC d\tau.
\end{align}
Let us choose a $T>0$ such that
\begin{equation}\label{3.9}
T\leq \min\{\frac{1}{C},\frac {1} {8C\|u_0\|_{B^{s_0}_{2,1}}}\},
\end{equation}
and suppose by induction that for all $t\in[0,T]$
\begin{equation}\label{3.10}
\|u^{(n)}(t)\|_{B^{s_0}_{2,1}}\leq\frac{2\|u_0\|_{{B^{s_0}_{2,1}}}}
{1-4C\|u_0\|_{{B^{s_0}_{2,1}}}t}.
\end{equation}
Indeed, one obtains from \eqref{3.8} and \eqref{3.10} that
\begin{align}\label{3.11}
\exp\{C\int^t_\tau\|(u^{(n)})(t')\|_{B^{s_0}_{2,1}}dt'\}
&\leq\exp\{\int^t_\tau\frac{2C\|u_0\|_{{B^{s_0}_{2,1}}}}
{1-4C\|u_0\|_{{B^{s_0}_{2,1}}}t'}dt'\}\nonumber \\
&=\exp\{-\frac{1}{2}\int^t_\tau
\frac{d(1-4C\|u_0\|_{{B^{s_0}_{2,1}}}t')}
{(1-4C\|u_0\|_{{B^{s_0}_{2,1}}}t')}\}\nonumber \\
&=\exp\{\frac{1}{2}\ln(\frac{1-4C\|u_0\|_{{B^{s_0}_{2,1}}}\tau}
{1-4C\|u_0\|_{{B^{s_0}_{2,1}}}t})\}\nonumber\\
&=(\frac{1-4C\|u_0\|_{{B^{s_0}_{2,1}}}\tau}
{1-4C\|u_0\|_{{B^{s_0}_{2,1}}}t})^{\frac{1}{2}}.
\end{align}
When $\tau=0$, we have
\begin{equation}\label{3.12}
e^{C\int^t_0\|(u^{(n)})(t')\|_{B^{s_0}_{2,1}} dt'}
\leq \frac{1}{(1-4C\|u_0\|_{{B^{s_0}_{2,1}}}t)^{\frac{1}{2}}}.
\end{equation}
Then combining \eqref{3.10} and \eqref{3.11}, we have
\begin{align}\label{3.13}
&\frac{C}{2}\int^t_0 e^{C\int^t_\tau\|(u^{(n)})(t')\|_{B^{s_0}_{2,1}}dt'}
(\|u^{(n)}\|_{B^{s_0}_{2,1}}+\|u^{(n)}\|^2_{B^{s_0}_{2,1}})d\tau \nonumber \\
&\leq \frac{\|u_0\|_{B^{s_0}_{2,1}}}
{({1-4C\|u_0\|_{{B^{s_0}_{2,1}}}t})^{1/2}}\nonumber \\
&\times \int^t_0 (1-4C\|u_0\|_{{B^{s_0}_{2,1}}}\tau)^{\frac{1}{2}}
 \LC\frac{C}
{(1-4C\|u_0\|_{{B^{s_0}_{2,1}}}\tau)}
+\frac{2C\|u_0\|_{B^{s_0}_{2,1}}}
{(1-4C\|u_0\|_{{B^{s_0}_{2,1}}}\tau)^{2}}\RC d\tau \nonumber\\
&=\frac{\|u_0\|_{B^{s_0}_{2,1}}}
{({1-4C\|u_0\|_{{B^{s_0}_{2,1}}}t})^{^{1/2}}}
\int^t_0\LC\frac{C}
{(1-4C\|u_0\|_{{B^{s_0}_{2,1}}}\tau)^{1/2}}
+\frac{2C\|u_0\|_{B^{s_0}_{2,1}}}
{(1-4C\|u_0\|_{{B^{s_0}_{2,1}}}\tau)^{3/2}}\RC d\tau \nonumber\\
&=\frac{\|u_0\|_{B^{s_0}_{2,1}}}
{({1-4C\|u_0\|_{{B^{s_0}_{2,1}}}t})^{^{1/2}}}
\LC\frac{Ct}
{(1-4C\|u_0\|_{{B^{s_0}_{2,1}}}\xi_2)^{1/2}}
+(1-4C\|u_0\|_{{B^{s_0}_{2,1}}}\tau)^{-1/2}\Bigg|^t_0\RC \nonumber\\
&\leq \frac{\|u_0\|_{B^{s_0}_{2,1}}}
{({1-4C\|u_0\|_{{B^{s_0}_{2,1}}}t})^{^{1/2}}}
\LC \frac{CT+1}{(1-4C\|u_0\|_{{B^{s_0}_{2,1}}}t)^{1/2}}-1 \RC,
\end{align}
where the mean-value theorem for the integral has been employed with $0<\xi_1<t$. Inserting \eqref{3.12} and \eqref{3.13} into \eqref{3.8}, we get that
\begin{align}\label{3.14}
\|u^{(n+1)}\|_{B^{s_0}_{2,1}}
\leq \frac{\LC 1+CT \RC\|u_0\|_{B^{s_0}_{2,1}}}
{{1-4C\|u_0\|_{{B^{s_0}_{2,1}}}t}}
\leq \frac{2\|u_0\|_{B^{s_0}_{2,1}}}
{{1-4C\|u_0\|_{{B^{s_0}_{2,1}}}t}}.
\end{align}
Thus, $(u^{(n)})_{n\in\mathbb{N}}$ is uniformly bounded in $C([0,T];B^{s_0}_{2,1})$. Using equation \eqref{3.4}, one can easily prove that $(\partial _tu^{(n)})_{n\in\mathbb{N}}$ is uniformly bounded in $C([0,T];B^{s_0-1}_{2,1})$. Consequently, $(u^{(n)})_{n\in\mathbb{N}} \subset C([0,T];B^{s_0}_{2,1}) \cap C^1([0,T];B^{s_0-1}_{2,1})$.

\vspace{3mm}

\emph{Third step: Convergence}. We first show that $(u^{(n)})_{n\in\mathbb{N}}$ is a Cauchy sequence in $C([0,T];B^{s_0-1}_{2,\infty})$, then by using (2.1) we prove that $(u^{(n)})_{n\in\mathbb{N}}$ is a Cauchy sequence in $C([0,T];B^{s_0-1}_{2,1})$. For $(m,n) \in \mathbb{N}^2$, we have
\begin{align}\label{3.15}
&\LB\partial_t+\LC 1+u^{(n+m)}\RC\partial_x\RB(u^{(n+1+m)}-u^{(n+1)})\nonumber\\
&=-\LC u^{(n+m)}-u^{(n)}\RC u^{(n+1)}_x+\p_xP(D)\LB\LC u^{(n+m)}-u^{(n)}\RC+\LC (u^{(n+m)})^2-(u^{(n)})^2\RC\RB\nonumber\\
&\;\quad+P(D)\LC[u^{(n+m)},(-\p_x^2)^\n]u^{(n+m)}_x-[u^{(n)},(-\p_x^2)^\n]u^{(n)}_x\RC\nonumber\\
&:=S_1(x,t)+S_2(x,t)+S_3(x,t).
\end{align}
We define
\begin{equation}\label{3.16}
w_{n,m}=\|(u^{(n+m)}-u^{(n)})(t)\|_{B^{s_0-1}_{2,\infty}}
\end{equation}
and
\begin{equation}\label{3.17}
w_{n}(t)=\sup _{m\in\mathbb{N}}w_{n,m}(t)
\end{equation}
as well as
\begin{equation}\label{3.18}
\widetilde{w}(t)=\limsup_{n\rightarrow \infty}w_{n}(t).
\end{equation}
We will show $\widetilde{w}(t)=0$, for $t \in [0,T] $. By \eqref{3.9}, \eqref{3.10} and \eqref{3.12}, we have that
\begin{equation}\label{3.19}
\|u^{(n)}(t)\|_{B^{s_0}_{2,1}}\leq 4\|u_0\|_{B^{s_0}_{2,1}}
\end{equation}
and
\begin{equation}\label{3.20}
e^{C\int^t_0\|(u^{(n)})(t')\|_{B^{s_0}_{2,1}} dt'}\leq 2.
\end{equation}
Using (2)-(5) and (10) of Lemma \ref{lem2.2} as well as \eqref{com-e2}-\eqref{com-e3} of Lemma \ref{com-e} and the above inequality, we obtain
\begin{align}
&\|(u^{(n+m)}-u^{(n)})u_x^{(n+1)}\|_{B^{s_0-1}_{2,\infty}}\nonumber\\
&\leq C\left(\|u^{(n+m)}-u^{(n)}\|_{B^{s_0-1}_{2,\infty}}
\|u_x^{(n+1)}\|_{L^{\infty}}
+\|u_x^{(n+1)}\|_{B^{s_0-1}_{2,\infty}}
\|u^{(n+m)}-u^{(n)}\|_{L^{\infty}}\right)\nonumber\\
&\leq C\|u^{(n+m)}-u^{(n)}\|_{B^{s_0-1}_{2,1}}\|u^{(n+1)}\|_{B^{s_0}_{2,1}},\nonumber
\end{align}
\begin{align}
\!\!\!\!\!\!\!\|\p_xP(D)\LC u^{(n+m)}-u^{(n)}\RC\|_{B^{s_0-1}_{2,\infty}}
\leq \|u^{(n+m)}-u^{(n)}\|_{B^{s_0-2\nu}_{2,\infty}}
\leq C\|u^{(n+m)}-u^{(n)}\|_{B^{s_0-1}_{2,1}},\nonumber
\end{align}
\begin{align}
&\|\p_xP(D)\LC (u^{(n+m)})^2-(u^{(n)})^2\RC\|_{B^{s_0-1}_{2,\infty}}\nonumber\\
&\leq C\|(u^{(n+m)})^2-(u^{(n)})^2\|_{B^{s_0-2\nu}_{2,\infty}}\nonumber\\
&\leq C\|(u^{(n+m)}-u^{(n)})(u^{(n+m)}+u^{(n)})\|_{B^{s_0-1}_{2,\infty}}\nonumber\\
&\leq C\left(\|u^{(n+m)}-u^{(n)}\|_{B^{s_0-1}_{2,\infty}}
\|u^{(n+m)}+u^{(n)}\|_{L^{\infty}}
+\|u^{(n+m)}+u^{(n)}\|_{B^{s_0-1}_{2,\infty}}
\|u^{(n+m)}-u^{(n)}\|_{L^{\infty}}\right)\nonumber\\
&\leq C\|u^{(n+m)}-u^{(n)}\|_{B^{s_0-1}_{2,1}}
\LC\|u^{(n+m)}\|_{B^{s_0}_{2,1}}+\|u^{(n)}\|_{B^{s_0}_{2,1}}\RC,\nonumber
\end{align}
\begin{align}
&\|P(D)\LC[u^{(n+m)},(-\p_x^2)^\n]u^{(n+m)}_x-[u^{(n)},(-\p_x^2)^\n]u^{(n)}_x\RC\|_{B^{s_0-1}_{2,\infty}}\nonumber\\
&=\|P(D)\LC[(u^{(n+m)}-u^{(n)}),(-\p_x^2)^\n]u^{(n+m)}_x-[u^{(n)},(-\p_x^2)^\n]\LC u^{(n+m)}_x-u^{(n)}_x\RC \RC\|_{B^{s_0-1}_{2,\infty}}\nonumber\\
&\leq \|[u^{(n+m)}-u^{(n)},(-\p_x^2)^\n]u^{(n+m)}_x\|_{B^{s_0-1-2\n}_{2,\infty}}
+\|[u^{(n)},(-\p_x^2)^\n]\LC u^{(n+m)}_x-u^{(n)}_x\RC\|_{B^{s_0-1-2\n}_{2,\infty}}\nonumber\\
&\leq C\|u^{(n+m)}-u^{(n)}\|_{B^{s_0-1}_{2,1}}\|u^{(n+m)}_x\|_{B^{s_0-1}_{2,1}}
+C\|u^{(n)}\|_{B^{s_0}_{2,1}}\|u_x^{(n+m)}-u_x^{(n)}\|_{B^{s_0-2}_{2,1}}\nonumber\\
&\leq C\|u^{(n+m)}-u^{(n)}\|_{B^{s_0-1}_{2,1}}
\LC\|u^{(n+m)}\|_{B^{s_0}_{2,1}}+\|u^{(n)}\|_{B^{s_0}_{2,1}}\RC.\nonumber
\end{align}
We define
\begin{equation}\label{3.21}
M=24\|u_0\|_{B^{s_0}_{2,1}}+2,
\end{equation}
then we have from above inequalities and \eqref{3.19} that
\begin{align}\label{3.22}
&\|S_1(x,t)+S_2(x,t)+S_3(x,t)\|_{B^{s_0-1}_{2,\infty}}\nonumber\\
&\leq C\|u^{(n+m)}-u^{(n)}\|_{B^{s_0-1}_{2,1}}\LC 1+\|u^{(n+m)}\|_{B^{s_0}_{2,1}}+\|u^{(n)}\|_{B^{s_0}_{2,1}}
+\|u^{(n+1)}\|_{B^{s_0}_{2,1}}\RC\nonumber\\
&\leq C \frac {M} 2 \|u^{(n+m)}-u^{(n)}\|_{B^{s_0-1}_{2,1}}.
\end{align}
Note that
\begin{align}\label{3.23}
\|u^{(n+1+m)}_0-u^{(n+1)}_0\|_{B^{s_0-1}_{2,\infty}}
&=\|(S_{n+1+m}u_0-S_{n+1}u_0)\|_{B^{s_0-1}_{2,\infty}}=\|\sum ^{n+m}_{q=n+1}\triangle _qu_0\|_{B^{s_0-1}_{2,\infty}}\nonumber\\
&=\sup_{k\geq1}2^{(s_0-1)k}\|\triangle _k(\sum ^{n+m}_{q=n+1}\triangle _qu_0)\|_{L^2}\nonumber\\
&=\sup_{n+1\leq k\leq n+m+1}2^{-k}2^{s_0k}
\|\triangle_{k-1}\triangle _ku_0+\triangle_{k+1}\triangle _ku_0\|_{L^2}\nonumber\\
&\leq \sup_{n\leq k\leq n+m}2^{-k}2^{s_0k}\|\triangle _ku_0\|_{L^2}
\leq C 2^{-n}\|u_0\|_{B^{s_0}_{2,1}}.
\end{align}
Applying \eqref{2.2} of Lemma \ref{lem2.3} and using \eqref{3.20}-\eqref{3.23}, we have for $t\in[0,T]$,
\begin{align}\label{3.24}
&\|(u^{(n+1+m)}-u^{(n+1)})(t)\|_{B^{s_0-1}_{2,\infty}}\nonumber\\
&\leq e^{C\int^t_0\|u^{(n+m)}(t')\|_{B^{s_0-1}_{2,\infty}}dt'} \|u^{(n+1+m)}_0-u^{(n+1)}_0\|_{B^{s_0-1}_{2,\infty}}\nonumber\\
&\qquad+\int^t_0 e^{C\int^t_\tau\|u^{(n+m)}(t')\|_{B^{s_0-1}_{2,\infty}}dt'}
\|S_1(x,t)+S_2(x,t)+S_3(x,t)\|_{B^{s_0-1}_{2,\infty}}d\tau\nonumber\\
&\leq CM2^{-n}
+CM\int^t_0\|u^{(n+m)}-u^{(n)}\|_{B^{s_0-1}_{2,1}}d\tau.
\end{align}
Combing \eqref{3.24}, \eqref{3.16} and (11) of Lemma \ref{lem2.2}, we know that for $\forall(n,m)\in \mathbb{N}^2$
\begin{align}\label{3.25}
w_{n+1,m}&=\|u^{(n+1+m)}-u^{(n+1)}\|_{B^{s_0-1}_{2,\infty}}\nonumber\\
&\leq CM\LC 2^{-n}+\int^t_0\|(u^{(n+m)}-u^{(n)})\|_{B^{s_0-1}_{2,1}}d\tau\RC\nonumber\\
&\leq CM\LB2^{-n}+\int^t_0\|(u^{(n+m)}-u^{(n)})\|_{B^{s_0-1}_{2,\infty}}
\ln \LC e+\frac{\|(u^{(n+m)}-u^{(n)})\|_{B^{s_0}_{2,\infty}}}
{\|(u^{(n+m)}-u^{(n)})\|_{B^{s_0-1}_{2,\infty}}}\RC d\tau\RB\nonumber\\
&\leq CM\LB2^{-n}+\int^t_0w_{n,m}(\tau)\ln \LC e+\frac{M}{w_{n,m}(\tau)}\RC d\tau\RB.
\end{align}
By \eqref{3.17} and \eqref{3.25}, we have
\begin{equation}\label{3.26}
w_{n+1}\leq CM\LB2^{-n}+\int^t_0w_{n}(\tau)\ln \LC e+\frac{M}{w_{n}(\tau)}\RC d\tau\RB.
\end{equation}
Letting $n\rightarrow+\infty$ in \eqref{3.26} yields
\begin{equation}\label{3.27}
\widetilde{w}(t)\leq CM\int^t_0\widetilde{w}(\tau)\ln (e+\frac{M}{\widetilde{w}(\tau)})d\tau.
\end{equation}
Because for $x\in(0,1]$ and $\alpha>0$, we have
\begin{equation}\label{3.28}
\ln(e+\frac{\alpha}{x})\leq \ln(e+\alpha)(1-\ln x).
\end{equation}
Then the inequality \eqref{3.27} can be rewritten as
\begin{equation}\label{3.29}
\widetilde{w}(t)\leq CM\int^t_0\widetilde{w}(\tau)\ln (e+M)(1-\ln \widetilde{w}(\tau))d\tau
\end{equation}
provided that $\widetilde{w}(t)\leq1$ on $[0,T]$. Using a Gronwall type argument
(see e.g. Lemma 5.2.1 in \cite{Ch}) yields $\widetilde{w}(t)=0$ for $t\in[0,T]$.

Now we claim that $(u^{(n)})_{n\in \mathbb{N}}$ is a Cauchy sequence in $C([0,T];B^{s_0-1}_{2,1})$. Using \eqref{2.1} of Lemma \ref{lem2.2}, we have that
\begin{align}\label{3.30}
&\|(u^{(n+1+m)}-u^{(n+1)})(t)\|_{B^{s_0-1}_{2,1}}\nonumber\\
&\leq C(\theta)\|(u^{(n+1+m)}-u^{(n+1)})(t)\|_{B^{s_0-1}_{2,\infty}}^{\theta}
\|(u^{(n+1+m)}-u^{(n+1)})(t)\|_{B^{s_0}_{2,\infty}}^{1-\theta}\nonumber\\
&\leq C(\theta)\|(u^{(n+1+m)}-u^{(n+1)})(t)\|_{B^{s_0-1}_{2,\infty}}^{\theta}
\LC
\|u^{(n+1+m)}(t)\|_{B^{s_0}_{2,1}}+\|u^{(n+1)}(t)\|_{B^{s_0}_{2,1}}\RC^{1-\theta}\nonumber\\
&\leq C(\theta)(CM)^{1-\theta}\|(u^{(n+1+m)}-u^{(n+1)})(t)\|_{B^{s_0-1}_{2,\infty}}^{\theta}\nonumber\\
&= C(\theta)(CM)^{1-\theta}w^{\theta}_{n+1,m}(t).
\end{align}
For $\forall t\in[0,T]$, $m\in \mathbb{N}$, we get from \eqref{3.30} that
\[
\limsup_{n\rightarrow \infty}\|(u^{(n+1+m)}-u^{(n+1)})(t)\|_{B^{s_0-1}_{2,1}}=0.
\]
Thus, $(u^{(n)})_{n\in \mathbb{N}}$ is a Cauchy sequence in $C([0,T];B^{s_0-1}_{2,1})$, whence $(u^{(n)})_{n\in \mathbb{N}}$ converges to some limit $u \in C([0,T];B^{s_0-1}_{2,1})$.

\vspace{3mm}

\emph{Fourth step: Existence and continuity of solution in $E^{s_0}_{2,1}(T)$}. Now we have to check that $u$ belongs to $E^{s_0}_{2,1}(T)$ and satisfies \eqref{3.2}. Since $(u^{(n)})_{n\in \mathbb{N}}$ is uniformly bounded in $L^{\infty}([0,T];B^{s_0}_{2,1})$. From (8) of Lemma \ref{lem2.2}, we have that $u\in L^{\infty}([0,T];B^{s_0}_{2,1})$. From \eqref{3.2}, we can easily prove that $u_t\in L^{\infty}([0,T];B^{s_0-1}_{2,1})$. It is easily checked that $u$ is indeed a solution to \eqref{3.2} by passing to the limit in \eqref{3.4}. Using similar proof to \cite{RD1}, we can obtain that $u\in E^{s_0}_{2,1}(T)$.

\vspace{3mm}

\emph{Fifth step: Uniqueness}. Uniqueness is a corollary of the following result.
\begin{proposition}\label{pro3.1}
Let $v$, $u$ be solutions to the problem \eqref{3.2} with initial data $v_0$, $u_0$, respectively.  Let $w(t):=v-u$. Obviously, $w(0):=v_0-u_0$. There exists a constant $C$ such that if for some $T_\star\leq T$
\[
\sup_{t\in[0,T_\star]}\LC e^{-C\int^t_0
\|u(\tau)\|_{B^{s_0-1}_{2,\infty}} d\tau}\|w(t)\|_{B^{s_0-1}_{2,\infty}}\RC \leq1,
\]
then the following inequality holds true for $t\in[0,T_\star]$:
\begin{equation}\label{3.31}
\|w(t)\|_{B^{s_0-1}_{2,\infty}}\leq e^{\LC 1+C\int^t_0\|u(\tau)\|_{B^{s_0-1}_{2,\infty}} d\tau\RC}
\LC \frac{\|w(0)\|_{B^{s_0-1}_{2,\infty}}}{e}\RC ^{\exp[-CtZ\ln(e+Z)]},
\end{equation}
where $Z$ is defined as
\[
Z=4\|u_0\|_{B^{s_0}_{2,1}}+4\|v_0\|_{B^{s_0}_{2,1}}+1.
\]
Furthermore , if
\begin{equation}\label{3.32}
\|w(0)\|_{B^{s_0-1}_{2,\infty}}\leq e^{1-\exp[CTZ\ln(e+Z)]},
\end{equation}
then \eqref{3.31} is valid on $[0,T]$. In particular, when $w(0)=0$, then $u(x,t)=v(x,t)$.
\end{proposition}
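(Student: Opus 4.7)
The plan is to derive a closed evolution equation for the difference $w=v-u$, to obtain a transport estimate in the ``weak'' Besov space $B^{s_0-1}_{2,\infty}$ via Lemma \ref{lem2.3}, and then to close the resulting integral inequality by an Osgood-type argument rather than a Gronwall one.

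Subtracting the two copies of \eqref{3.2} and using the identity $(1+v)v_x-(1+u)u_x=(1+u)w_x+w\,v_x$, I obtain
\[
\partial_t w+(1+u)\partial_x w=-w\,v_x+\partial_x P(D)\bigl(f_1(v)-f_1(u)\bigr)+P(D)\bigl(f_2(v,v_x)-f_2(u,u_x)\bigr),
\]
with initial data $w(0)=v_0-u_0$. The right-hand side can be handled in $B^{s_0-1}_{2,\infty}$ along the exact pattern of the terms $S_1,S_2,S_3$ in the Third step of the proof of Theorem \ref{the1.1}: write $f_1(v)-f_1(u)=w(1+u+v)$ and split $f_2(v,v_x)-f_2(u,u_x)=[w,(-\partial_x^2)^\nu]v_x+[u,(-\partial_x^2)^\nu]w_x$; then the Moser inequalities together with \eqref{com-e2}--\eqref{com-e3} give
\[
\bigl\|\mathrm{RHS}\bigr\|_{B^{s_0-1}_{2,\infty}}\leq C\bigl(1+\|u\|_{B^{s_0}_{2,1}}+\|v\|_{B^{s_0}_{2,1}}\bigr)\|w\|_{B^{s_0-1}_{2,1}}\leq \tfrac{C}{2}Z\,\|w\|_{B^{s_0-1}_{2,1}}.
\]

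Applying Lemma \ref{lem2.3}(1) with $p=2,\ r=\infty,\ s=s_0-1$ to the transport equation for $w$ (controlling the velocity norm $\|u_x\|_{B^{1/2}_{2,\infty}\cap L^\infty}$ by $\|u\|_{B^{s_0-1}_{2,\infty}}$ via embedding) produces an inequality of the shape
\[
\|w(t)\|_{B^{s_0-1}_{2,\infty}}\leq e^{C\int_0^t\|u(\tau)\|_{B^{s_0-1}_{2,\infty}}d\tau}\!\!\left(\|w(0)\|_{B^{s_0-1}_{2,\infty}}+CZ\!\int_0^t e^{-C\int_0^\tau\|u\|_{B^{s_0-1}_{2,\infty}}ds}\|w(\tau)\|_{B^{s_0-1}_{2,1}}d\tau\right).
\]
The key step is now the logarithmic interpolation of item (11) of Lemma \ref{lem2.2}, which, combined with $\|w\|_{B^{s_0}_{2,\infty}}\leq\|u\|_{B^{s_0}_{2,1}}+\|v\|_{B^{s_0}_{2,1}}\leq Z$, yields
\[
\|w\|_{B^{s_0-1}_{2,1}}\leq C\|w\|_{B^{s_0-1}_{2,\infty}}\ln\!\Bigl(e+\tfrac{Z}{\|w\|_{B^{s_0-1}_{2,\infty}}}\Bigr).
\]
Setting $\eta(t):=e^{-C\int_0^t\|u\|_{B^{s_0-1}_{2,\infty}}d\tau}\|w(t)\|_{B^{s_0-1}_{2,\infty}}$ and exploiting the standing hypothesis $\eta\leq 1$ on $[0,T_\star]$, the previous two displays combine into an Osgood-type integral inequality
\[
\eta(t)\leq \eta(0)+CZ\!\int_0^t\eta(\tau)\ln\!\bigl(e+Z/\eta(\tau)\bigr)d\tau.
\]

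To conclude, I use the elementary bound $\ln(e+Z/x)\leq\ln(e+Z)(1-\ln x)$ valid for $x\in(0,1]$ and apply the Osgood/Gronwall-type lemma (Lemma 5.2.1 in \cite{Ch}): its modulus $x\mapsto x(1-\ln x)$ produces precisely the double-exponential factor $(\|w(0)\|/e)^{\exp[-CtZ\ln(e+Z)]}$ appearing in \eqref{3.31}. For the second assertion, the smallness condition \eqref{3.32}, together with continuity of $t\mapsto\eta(t)$ and \eqref{3.31} itself, runs a standard bootstrap showing that $\eta(t)\leq 1$ persists on all of $[0,T]$, so \eqref{3.31} extends to $[0,T]$; the case $w(0)=0$ immediately forces $w\equiv 0$, which gives uniqueness. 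The main technical obstacle is the one-derivative loss hidden in the commutator estimate, which blocks a direct Gronwall closure at the regularity $B^{s_0-1}_{2,\infty}$: the logarithmic interpolation trades this loss for a logarithmic factor in $\|w\|$, and it is Osgood's lemma (not Gronwall's) that absorbs this factor while keeping the bound sharp enough to yield uniqueness.
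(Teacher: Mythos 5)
Your proposal is correct and follows essentially the same route as the paper: the same transport equation for $w$ with the same splitting of $f_1(v)-f_1(u)$ and of the commutator difference, the bound $CZ\|w\|_{B^{s_0-1}_{2,1}}$ on the source terms, the logarithmic interpolation of item (11) of Lemma \ref{lem2.2} combined with $\|w\|_{B^{s_0}_{2,\infty}}\leq Z$, the weighted quantity $W(t)=e^{-C\int_0^t\|u\|_{B^{s_0-1}_{2,\infty}}d\tau}\|w(t)\|_{B^{s_0-1}_{2,\infty}}$, and the Osgood closure via $\ln(e+Z/x)\leq\ln(e+Z)(1-\ln x)$ and Lemma 5.2.1 of \cite{Ch}. (Your sign $[w,(-\partial_x^2)^\nu]v_x+[u,(-\partial_x^2)^\nu]w_x$ in the commutator splitting is in fact the correct one; the discrepancy with \eqref{3.33} is immaterial since both terms are estimated in norm.)
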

\begin{proof}
Obviously, $w$ solves the following Cauchy problem for the transport equation:
\begin{align}\label{3.33}
& w_t+\LC 1+u\RC w_x\nonumber\\
&=-wv_x
+\p_xP(D)\LC w+w(v+u)\RC
+P(D)\LC [w,(-\p_x^2)^\n]v_x-[u,(-\p_x^2)^\n]w_x\RC\nonumber\\
&:=\widetilde{S_1}(x,t)+\widetilde{S_2}(x,t)+\widetilde{S_3}(x,t).
\end{align}
Using \eqref{2.2} of Lemma \ref{lem2.3} and \eqref{3.33}, we have
\begin{align}\label{3.34}
&\|w(t)\|_{B^{s_0-1}_{2,\infty}}\leq \|w(0)\|_{B^{s_0-1}_{2,\infty}}
e^{C\int^t_0\|u(\tau)\|_{B^{s_0-1}_{2,\infty}} d\tau}\nonumber\\
&+\int^{t}_{0}e^{C\int^t_{\tau}\|u(t')\|_{B^{s_0-1}_{2,\infty}} dt'}\|\widetilde{S_1}(x,t)+\widetilde{S_2}(x,t)+\widetilde{S_3}(x,t)\|_{B^{s_0-1}_{2,\infty}},
d\tau.
\end{align}
Following the proof of \eqref{3.22}, we obtain
\begin{equation}\label{3.35}
\|\widetilde{S_1}(x,t)+\widetilde{S_2}(x,t)+\widetilde{S_3}(x,t)\|_{B^{s_0-1}_{2,\infty}}
\leq C Z \|w\|_{B^{s_0-1}_{2,1}}.
\end{equation}
Inserting \eqref{3.35} into \eqref{3.34} yields
\begin{align}\label{3.36}
&\|w(t)\|_{B^{s_0-1}_{2,\infty}}\nonumber\\
&\leq \|w(0)\|_{B^{s_0-1}_{2,\infty}}
e^{C\int^t_0\|u(\tau)\|_{B^{s_0-1}_{2,\infty}} d\tau}
+CZ\int^{t}_{0}e^{C\int^t_{\tau}\|u(t')\|_{B^{s_0-1}_{2,\infty}} dt'} \|w\|_{B^{s_0-1}_{2,1}}d\tau\nonumber\\
&\leq\|w(0)\|_{B^{s_0-1}_{2,\infty}}
e^{C\int^t_0\|u(\tau)\|_{B^{s_0-1}_{2,\infty}} d\tau}
+CZ\int^{t}_{0}e^{C\int^t_{\tau}\|u(t')\|_{B^{s_0-1}_{2,\infty}} dt'} \|w\|_{B^{s_0-1}_{2,\infty}}\ln(e+\frac{\|w\|_{B^{s_0}_{2,\infty}}}
{\|w\|_{B^{s_0-1}_{2,\infty}}})d\tau\nonumber\\
&\leq CZ\int^{t}_{0}e^{C\int^t_{\tau}\|u(t')\|_{B^{s_0-1}_{2,\infty}} dt'} \|w\|_{B^{s_0-1}_{2,\infty}}\ln\LC e+\frac{Z}
{e^{-C\int^{\tau}_0\|u(t')\|_{B^{s_0-1}_{2,\infty}} dt'}\|w\|_{B^{s_0-1}_{2,\infty}}}\RC d\tau\nonumber\\
&\quad+
\|w(0)\|_{B^{s_0-1}_{2,\infty}}
e^{C\int^t_0\|u(\tau)\|_{B^{s_0-1}_{2,\infty}} d\tau}.
\end{align}
Denote
\[
W(t)=e^{-C\int^{t}_0\|u(\tau)\|_{B^{s_0-1}_{2,\infty}} d\tau}\|w\|_{B^{s_0-1}_{2,\infty}}.
\]
Inequality \eqref{3.36} can be rewritten as
\begin{align}
W(t)\leq W(0)+CZ\int^t_0W(\tau)\ln(e+\frac{Z}{W(\tau)})d\tau
\end{align}
In light of the hypothesis and using a Gronwall type argument \cite{YYY} yields
\[
\frac{W(t)}{e}\leq(\frac{W(0)}{e})^{\exp[-CtZ\ln(e+Z)]},
\]
implying the desired result. \eqref{3.32} implies that \eqref{3.31} is valid with $T_\star=T$.
\end{proof}

\vspace{3mm}

\emph{Sixth step: Continuity with respect to the initial data in $B^{s_0}_{2,1}$}.
\begin{proposition} \label{pro3.2}
For any $u_0\in B^{s_0}_{2,1}$, there exist a $T>0$ and a neighborhood $V$ of $u_0$ in $B^{s_0}_{2,1}$ such that the map
\[\Phi:
\begin{cases}
V\subset B^{s_0}_{2,1}\rightarrow C([0,T];B^{s_0}_{2,1}),\\
v_0\rightarrow v \;\text{solution to \eqref{3.2} with initial datum $v_0$}\\
\end{cases}
\]
is continuous.
\end{proposition}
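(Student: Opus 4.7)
The plan is to combine three ingredients: a uniform lifespan for data in a small ball around $u_0$, the logarithmic stability in weak topology from Proposition \ref{pro3.1}, and a Friedrichs/Bona--Smith type smoothing argument to recover convergence at the critical regularity. First I would exploit the fact that the uniform bound \eqref{3.10}--\eqref{3.14} of the second step depends only on $\|u_0\|_{B^{s_0}_{2,1}}$: picking $\rho>0$ small and setting $V=\{v_0\in B^{s_0}_{2,1}\colon \|v_0-u_0\|_{B^{s_0}_{2,1}}<\rho\}$, one obtains a single $T>0$ such that every $v_0\in V$ gives rise to a solution $v=\Phi(v_0)\in E^{s_0}_{2,1}(T)$ with a uniform bound $\|v\|_{L^\infty(0,T;B^{s_0}_{2,1})}\le K$ depending only on $\|u_0\|_{B^{s_0}_{2,1}}$ and $\rho$.

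Next, given a sequence $v_0^{(n)}\to u_0$ in $B^{s_0}_{2,1}$, set $v^{(n)}=\Phi(v_0^{(n)})$ and $u=\Phi(u_0)$. The uniform bound allows one to take the constant $Z$ of Proposition \ref{pro3.1} independent of $n$, say $Z\le 8K+1$. The embedding $B^{s_0}_{2,1}\hookrightarrow B^{s_0-1}_{2,\infty}$ gives $\|v_0^{(n)}-u_0\|_{B^{s_0-1}_{2,\infty}}\to 0$, so the smallness hypothesis in Proposition \ref{pro3.1} is satisfied for $n$ large, and inequality \eqref{3.31} yields
\[
\lim_{n\to\infty}\sup_{t\in[0,T]}\|v^{(n)}(t)-u(t)\|_{B^{s_0-1}_{2,\infty}}=0.
\]
Interpolating via \eqref{2.1} between this weak convergence and the uniform $B^{s_0}_{2,1}$-bound delivers $v^{(n)}\to u$ in $C([0,T];B^{s_0'}_{2,1})$ for every $s_0'<s_0$.

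To upgrade to convergence in $C([0,T];B^{s_0}_{2,1})$, I would apply a Bona--Smith argument. Smooth the data by $v_0^{(n),N}:=S_N v_0^{(n)}$ and $u_0^{N}:=S_N u_0$, denote by $v^{(n),N}$, $u^{N}$ the corresponding solutions (which enjoy extra $B^{s_0+1}_{2,1}$ regularity, with bound depending on $N$ but uniform in $n$), and split
\[
v^{(n)}-u=(v^{(n)}-v^{(n),N})+(v^{(n),N}-u^{N})+(u^{N}-u).
\]
For the outer terms I would run the $B^{s_0}_{2,1}$ transport estimate \eqref{2.2} on the difference of two solutions issuing from data $v_0$ and $S_N v_0$, using the uniform bound together with the commutator estimate \eqref{com-e1} and the Moser inequality, to control the outer pieces by $\|v_0^{(n)}-S_N v_0^{(n)}\|_{B^{s_0}_{2,1}}$ and $\|u_0-S_N u_0\|_{B^{s_0}_{2,1}}$; both tend to zero as $N\to\infty$, and because the sequence $\{v_0^{(n)}\}$ is convergent hence equicontinuous in the $B^{s_0}_{2,1}$ tail, the first estimate is uniform in $n$. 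For the middle term, at fixed $N$ the smoothed solutions lie in $B^{s_0+1}_{2,1}$ uniformly in $n$, so interpolating between the $B^{s_0-1}_{2,\infty}$ convergence already obtained and this higher regularity yields convergence in $B^{s_0}_{2,1}$. An $\varepsilon/3$ argument—first fix $N$ so the outer terms are $<\varepsilon/3$, then send $n\to\infty$—concludes.

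The hard part is the third step: at the critical Besov index $s_0$ the solution map is not Lipschitz (differentiating the nonlinearity formally costs one derivative, which cannot be absorbed by \eqref{com-e1}), so a naive difference estimate in $B^{s_0}_{2,1}$ fails. The role of the Friedrichs smoothing is to provide controlled extra regularity for the regularised solutions, which can then be traded against the weak convergence produced in the second step; ensuring that the high-frequency tail $\|v_0^{(n)}-S_N v_0^{(n)}\|_{B^{s_0}_{2,1}}$ goes to zero \emph{uniformly} in $n$ is the technical point that makes the $\varepsilon/3$ splitting work.
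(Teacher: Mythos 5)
Your first two paragraphs reproduce the paper's first step essentially verbatim: a uniform lifespan $T$ and bound $M$ on a ball around $u_0$, the stability inequality of Proposition \ref{pro3.1} in $B^{s_0-1}_{2,\infty}$, and interpolation via \eqref{2.1} to get convergence in $C([0,T];B^{s'}_{2,1})$ for $s'<s_0$. Where you diverge is the upgrade to the critical norm. The paper follows Danchin: it differentiates the equation, views $v^{(n)}=\partial_x u^{(n)}$ as the solution of a \emph{linear} transport equation with velocity $1+u^{(n)}$ and source $\widetilde f^{(n)}$, and splits $v^{(n)}=v_1^{(n)}+v_2^{(n)}$, where $v_1^{(n)}$ carries the difference data and difference source (killed by the transport estimate \eqref{3.37} and the already-established lower-norm convergence) and $v_2^{(n)}$ has \emph{fixed} data and source but a varying coefficient, so that the coefficient-stability result of Lemma \ref{lem2.5} applies. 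You instead propose a Bona--Smith/Friedrichs regularization with an $\varepsilon/3$ splitting. Both are standard ways to recover continuity of a non-Lipschitz flow map at critical regularity; the paper's route leans on the linear-transport machinery already assembled, while yours trades that for the (also standard, but here entirely new) machinery of regularized solutions with quantified extra regularity.

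The one place your sketch must be firmed up is the outer-term estimate, and it is exactly the point you flag yourself. A direct $B^{s_0}_{2,1}$ transport estimate \eqref{2.2} on $w^{(n),N}=v^{(n)}-v^{(n),N}$ does \emph{not} close by itself: the source contains terms like $w^{(n),N}\partial_x v^{(n),N}$ and the analogous commutator contribution, whose $B^{s_0}_{2,1}$ norms involve $\|v^{(n),N}\|_{B^{s_0+1}_{2,1}}\sim 2^{N}$ multiplied by $\|w^{(n),N}\|_{L^\infty}$. The argument closes only when this is paired with the lower-order difference bound $\|w^{(n),N}\|_{L^\infty_T B^{s_0-1}_{2,1}}\lesssim \|(\mathrm{Id}-S_N)v_0^{(n)}\|_{B^{s_0-1}_{2,1}}\lesssim 2^{-N}\|(\mathrm{Id}-S_N)v_0^{(n)}\|_{B^{s_0}_{2,1}}$, so that the product $2^{N}\cdot 2^{-N}$ collapses to the high-frequency tail, which tends to $0$ uniformly in $n$ by the equicontinuity you invoke. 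So the estimate is a coupled two-norm estimate, not the single-norm one you describe; with that correction the Bona--Smith route does yield Proposition \ref{pro3.2}, and it requires a version of the commutator bound \eqref{com-e1} for differences at mixed regularities (which the paper's Lemma \ref{com-e} in fact supplies).
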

\begin{proof}Motivated by \cite{RD2}, we prove Proposition \ref{pro3.2} by using Lemma \ref{lem2.5}.

\emph{First step: Continuity in $C([0,T];B^{s_0-1}_{2,1})$}. For $u_0\in B^{s_0}_{2,1}$ and $r>0$, we claim that there exist a $T>0$ and a $M>0$ such that for any $u'_0\in B^{s_0}_{2,1}$ with $\|u_0-u'_0\|_{B^{s_0}_{2,1}}\leq r$, the solution $u'=\Phi(u'_0)$ of \eqref{3.2} associated with $u'_0$ belongs to $C([0,T];B^{s_0}_{2,1})$ and satisfies
\[
\|u'\|_{L^{\infty}(0,T;B^{s_0}_{2,1})}\leq M.
\]
Indeed, from
\[
\|u'\|_{B^{s_0}_{2,1}}\leq\frac{2\|u'_0\|_{{B^{s_0}_{2,1}}}}
{1-4C\|u'_0\|_{{B^{s_0}_{2,1}}}t},
\]
we know that $T<\frac{1}{4C\|u'_0\|_{{B^{s_0}_{2,1}}}}$. Thus we can choose
\[
T=\frac{1}{8C\LC(\|u_0\|_{{B^{s_0}_{2,1}}}+r)+r\RC},\quad M=4\|u_0\|_{{B^{s_0}_{2,1}}}+4r.
\]
Then
\[
T\leq\frac{1}{8C(\|u'_0\|_{{B^{s_0}_{2,1}}}+r)},
\]
and
\[
\|u'\|_{B^{s_0}_{2,1}}\leq \frac{2\|u'_0\|_{{B^{s_0}_{2,1}}}}{1-\frac{\|u'_0\|_{{B^{s_0}_{2,1}}}}
{2(\|u'_0\|_{{B^{s_0}_{2,1}}}+r)}}
\leq4\|u'_0\|_{{B^{s_0}_{2,1}}}\leq M.
\]
Combining the above uniform bounds with Proposition \ref{pro3.1}, we infer that
\[
\|\Phi(u'_0)-\Phi(u_0)\|_{L^{\infty}(0,T;{B^{s_0-1}_{2,\infty}})}
\leq e^{(1+CMT)}(\frac{\|u'_0-u_0\|_{B^{s_0-1}_{2,\infty}}}{e})^{\exp[-CTZ\ln(e+Z)]}
\]
provided that
\[
\|u'_0-u_0\|_{B^{s_0-1}_{2,\infty}}\leq e^{1-\exp[CTZ\ln(e+Z)]}.
\]
In view of the uniform bounds in $C([0,T];B^{s_0}_{2,1})$ and an interpolation argument, we infer the map $\Phi$ is continuous from $B^{s_0}_{2,1}$ into $C([0,T];B^{s_0-1}_{2,1})$.

\emph{Second step: Continuity in $C([0,T];B^{s_0}_{2,1})$}. Let $u_0^{(\infty)} \in B^{s_0}_{2,1}$ and $(u_0^{(n)})_{n\in \mathbb{N}}$ tend to $u_0^{(\infty)}$ in $B^{s_0}_{2,1}$. We denote by $u^{(n)}$ the solution with the initial data $u_0^{(n)}$. From the first step, we can find $T$, $M>0$ such that for all $n\in \mathbb{N}$, $u^{(n)}$ is defined on $[0,T]$ and
\[
\sup_{n\in \overline{\mathbb{N}}}\|u^{(n)}\|_{L^{\infty}(0,T;{B^{s_0}_{2,1}})}\leq M.
\]
Thanks to step one, proving that $u^{(n)}$ tends to $u^{(\infty)}$ in $C([0,T];B^{s_0}_{2,1})$ amounts to proving that $v^{(n)}=\partial_xu^{(n)}$ tends to $v^{(\infty)}=\partial_xu^{(\infty)}$ in $C([0,T];B^{s_0-1}_{2,1})$. Notice that $v^{(n)}$ solves the following linear transport equations
\[
\begin{cases}
\partial_tv^{(n)}+\LC1+u^{(n)}\RC\partial_xv^{(n)}=\widetilde{f}^{(n)},\\
v^{(n)}|_{t=0}=\partial_xu^{(n)}_0,
\end{cases}
\]
with
\begin{align}
\widetilde{f}^{(n)}=-(u_x^{(n)})^2+\p^2_xP(D)f_1(u^{(n)})+\p_xP(D)f_2(u^{(n)},u_x^{(n)}).\nonumber
\end{align}
Following the method in \cite{Ka1}, we decompose $v^{(n)}=v_1^{(n)}+v_2^{(n)}$ with
\[
\begin{cases}
\partial_tv_1^{(n)}+\LC1+u^{(n)}\RC\partial_xv_1^{(n)}
=\widetilde{f}^{(n)}
-\widetilde{f}^{(\infty)},\\
v_1^{(n)}|_{t=0}=\partial_xu^{(n)}_0-\partial_xu^{(\infty)}_0,
\end{cases}
\]
and
\[
\begin{cases}
\partial_tv_2^{(n)}+\LC1+u^{(n)}\RC\partial_xv_2^{(n)}
=\widetilde{f}^{(\infty)},\\
v_2^{(n)}|_{t=0}=\partial_xu^{(\infty)}_0.
\end{cases}
\]
On the other hand, we have
\begin{equation*}
\!\!\!\!\!\!\!\!\!\!\!\!\!\!\!\!\!\!\!\!\!\!\!\!\!\!\!\!\!\!\!\!\!\!\!\!\!\!\!\!\!\!\!\!\!\!\!\!\!\!\!\!\!\!\!\!\!\!
\!\!
\|(u_x^{(n)})^2\|_{B^{s_0-1}_{2,1}}\leq \|u_x^{(n)}\|^2_{B^{s_0-1}_{2,1}}\leq \|u^{(n)}\|^2_{B^{s_0}_{2,1}},
\end{equation*}
\begin{align*}
\qquad\quad\|\p^2_xP(D)f_1(u^{(n)})\|_{B^{s_0-1}_{2,1}}&\leq \|u^{(n)}+(u^{(n)})^2\|_{B^{s_0+1-2\nu}_{2,1}}\nonumber\\
&\leq \|u^{(n)}+(u^{(n)})^2\|_{B^{s_0-1}_{2,1}}\leq\|u^{(n)}\|_{B^{s_0}_{2,1}}+\|u^{(n)}\|^2_{B^{s_0}_{2,1}},
\end{align*}
\begin{align*}
\|\p_xP(D)f_2(u^{(n)},u_x^{(n)})\|_{B^{s_0-1}_{2,1}}&\leq \|[u^{(n)},(-\p_x^2)^\n]u_x^{(n)}\|_{B^{s_0-2\nu}_{2,1}}\nonumber\\
&\leq\|u^{(n)}\|_{B^{s_0}_{2,1}}\|u_x^{(n)}\|_{B^{s_0-1}_{2,1}}
\leq \|u^{(n)}\|^2_{B^{s_0}_{2,1}},
\end{align*}
and thus $(\widetilde{f}^{(n)})_{n\in\mathbb{N}}$ is uniformly bounded in $C([0,T];B^{s_0}_{2,1})$. A similar argument yields the following inequalities
\begin{align}
&\|\widetilde{f}^{(n)}-\widetilde{f}^{(\infty)}\|_{B^{s_0-1}_{2,1}}\nonumber\\
&\leq C\LC 1+\|u^{(n)}\|_{B^{s_0}_{2,1}}+\|u^{(\infty)}\|_{B^{s_0}_{2,1}}\RC
\LC\|u^{(n)}-u^{(\infty)}\|_{B^{s_0}_{2,1}}
+\|u_x^{(n)}-u_x^{(\infty)}\|_{B^{s_0-1}_{2,1}}\RC.\nonumber
\end{align}
Applying Lemma \ref{lem2.3}, one can deduce that
\begin{align}\label{3.37}
\|v_1^{(n)}(t)\|_{B^{s_0-1}_{2,1}}&\leq
e^{C\int^t_0\|u(\tau)\|_{B^{s_0}_{2,1}}d\tau}
\|\partial_xu^{(n)}_0-\partial_xu^{(\infty)}_0\|_{B^{s_0-1}_{2,1}}
\nonumber\\
&+C\int^{t}_{0}e^{C\int^t_{\tau}\|u_x(\tau')\|_{B^{s_0-1}_{2,1}}d\tau'} \|\widetilde{f}^{(n)}-\widetilde{f}^{(\infty)}\|_{B^{s_0-1}_{2,1}}d\tau\nonumber\\
&\leq
e^{C\int^t_0\|u(\tau)\|_{B^{s_0}_{2,1}}d\tau}
\|\partial_xu^{(n)}_0-\partial_xu^{(\infty)}_0\|_{B^{s_0-1}_{2,1}}
\nonumber\\
&+C\int^{t}_{0}e^{C\int^t_{\tau}\|u_x(\tau')\|_{B^{s_0-1}_{2,1}}d\tau'}
\LC\|u^{(n)}-u^{(\infty)}\|_{B^{s_0-1}_{2,1}}
+\|u_x^{(n)}-u_x^{(\infty)}\|_{B^{s_0-1}_{2,1}}\RC\nonumber\\
&\times\LC 1+\|u^{(n)}\|_{B^{s_0}_{2,1}}+\|u^{(\infty)}\|_{B^{s_0}_{2,1}}\RC d\tau.
\end{align}
Applying similar arguments as in \cite{RD2} on P. 441 to \eqref{3.37}, we have
\[
\partial _xu^{(n)}\rightarrow \partial _xu^{(\infty)}\quad \text{in}\;B^{s_0-1}_{2,1}.
\]
We have completed the proof of Proposition \ref{pro3.2}.
\end{proof}

Summing up the above six steps, we get Theorem \ref{the1.1}.

\section{Proof of Theorem \ref{the1.2}}
In this section, we are devoted to establishing the existence and uniqueness of analytic solutions to the system \eqref{3.2} on the line $\mr$.

The proof of Theorem \ref{the1.2} needs a suitable scale of Banach spaces as follows. For any $s>0$, we set
\begin{equation}\label{5.1}
E_s=\LCB u\in C^{\infty}(\mr): \normmm u_s=\sup_{k\in\N_0}\frac {s^k\|\p_x^ku\|_{B^{s_0}_{2,1}}}{k!/(k+1)^2}<\infty\RCB,
\end{equation}
where $\N_0$ is the set of nonnegative integers. We take note that the above space is similar to the one introduced in \cite{HM}, where $B^{s_0}_{2,1}$ is replaced by $H^2$. It is easy to verify that $E_s$ equipped with the norm $\normmm \cdot$ is a Banach space and that for any $0<s'<s$, $E_s$ is continuously embedded in $E_{s'}$ with
\begin{equation}\label{5.2}
\normmm u_{s'}\leq \normmm u_s.
\end{equation}
By this definition, one can easily get that $u$ in $E_s$ is a real analytic function on $\mr$ and what is crucial for our purposes is the fact that each $E_s$ forms an algebra under pointwise multiplication of functions.
\begin{lemma}\label{lem5.1}
(1) Let $0<s<1$. There is a constant $C>0$, independent of $s$, such that for any $u$ and $v$ in $E_s$ we have
\begin{equation}\label{5.3}
\normmm {uv}_{s}\leq C\normmm u_s\normmm v_s.
\end{equation}
(2) There is a constant $C>0$ such that for any $0<s'<s<1$, we have
\begin{equation}\label{e1}
\normmm {\p_xu}_{s'}\leq \frac {C}{s-s'}\normmm u_s,
\end{equation}
\begin{equation}\label{e2}
\normmm {P(D)u}_{s}\leq C\normmm u_{s}, \quad \normmm {\p_xP(D)u}_{s}\leq C\normmm u_{s}, \quad
\normmm {\p^2_xP(D)u}_{s}\leq C\normmm u_{s},
\end{equation}
and
\begin{equation}\label{e3}
\normmm{P(D)[u,(-\p_x^2)^\n]v_x}_{s}\leq C\normmm u_{s}\normmm v_{s}.
\end{equation}
\end{lemma}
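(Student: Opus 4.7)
The plan has three parts mirroring the three assertions, and all three reduce to a single combinatorial inequality
\begin{equation*}
\sum_{j=0}^{k}\frac{1}{(j+1)^{2}(k-j+1)^{2}}\leq \frac{C}{(k+1)^{2}},
\end{equation*}
which I would prove by splitting the sum at $j=k/2$: on one half $k-j+1\geq (k+1)/2$, on the other half $j+1\geq (k+1)/2$, and in each case the remaining factor is absolutely summable in $j$ independently of $k$.

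For part (1) I would start from the Leibniz rule $\partial_x^k(uv)=\sum_{j=0}^{k}\binom{k}{j}\partial_x^j u\,\partial_x^{k-j}v$ and use that $B^{s_0}_{2,1}$ is a Banach algebra (item (4) of Lemma \ref{lem2.2}, since $s_0>1/2$). Plugging in the definition of $\normmm{\cdot}_s$, the binomial and the factorials collapse to
\begin{equation*}
s^{k}\|\partial_x^k(uv)\|_{B^{s_0}_{2,1}}\leq C\normmm{u}_s\normmm{v}_s\,k!\sum_{j=0}^{k}\frac{1}{(j+1)^{2}(k-j+1)^{2}},
\end{equation*}
and the displayed inequality produces exactly the factor $k!/(k+1)^{2}$ needed for $\normmm{uv}_s\leq C\normmm{u}_s\normmm{v}_s$.

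For part (2) the derivative estimate reduces, after inserting the definition at index $k+1$, to bounding $r^{k}(k+1)$ uniformly in $k$ with $r=s'/s\in(0,1)$. A one-line calculus argument gives $\sup_{k\geq 0}r^{k}(k+1)\leq 1/(e|\ln r|)$, and the elementary bound $|\ln r|\geq 1-r=(s-s')/s$ yields $\normmm{\partial_x u}_{s'}\leq C(s-s')^{-1}\normmm{u}_s$. For the three multiplier bounds in \eqref{e2} I would invoke item (9) of Lemma \ref{lem2.2}: $P(D)$, $\partial_x P(D)$, $\partial_x^{2}P(D)$ are $S^{-2\nu}$, $S^{1-2\nu}$, $S^{2-2\nu}$ multipliers respectively, all of nonpositive order since $\nu\geq 1$, hence bounded on $B^{s_0}_{2,1}$; since they commute with $\partial_x$, the bound passes term-by-term to $\normmm{\cdot}_s$ without loss.

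Part (3) is where the main work lies, and the key step is the commutator Leibniz identity
\begin{equation*}
\partial_x^k\bigl[u,(-\partial_x^2)^\nu\bigr]g=\sum_{j=0}^{k}\binom{k}{j}\bigl[\partial_x^j u,(-\partial_x^2)^\nu\bigr]\partial_x^{k-j}g,
\end{equation*}
proved by induction on $k$ starting from the one-step identity $\partial_x[u,A]g=[\partial_x u,A]g+[u,A]\partial_x g$, valid because $A=(-\partial_x^2)^\nu$ commutes with $\partial_x$. Applying this with $g=v_x$ and hitting with $P(D)$, each summand is estimated in $B^{s_0}_{2,1}$ by combining the multiplier inclusion $P(D)\colon B^{s_0-2\nu}_{2,1}\to B^{s_0}_{2,1}$, the commutator bound \eqref{com-e1} of Lemma \ref{com-e}, and the trivial $\|\partial_x^{k-j+1}v\|_{B^{s_0-1}_{2,1}}\leq \|\partial_x^{k-j}v\|_{B^{s_0}_{2,1}}$, producing exactly the same combinatorial sum as in part (1). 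The main obstacle is verifying the commutator Leibniz identity cleanly and arranging the derivative shift so that the factor $v_x$ contributes a $\partial_x^{k-j}v$ absorbable into $\normmm{v}_s$; once that is done, the combinatorial estimate closes everything uniformly in $k$.
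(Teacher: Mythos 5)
Your proposal follows essentially the same route as the paper: the paper proves \eqref{e2} by the termwise multiplier bound $\|\p_x^k\p_x^2P(D)u\|_{B^{s_0}_{2,1}}\leq C\|\p_x^ku\|_{B^{s_0-2\nu+2}_{2,1}}\leq C\|\p_x^ku\|_{B^{s_0}_{2,1}}$ and proves \eqref{e3} by exactly your commutator Leibniz expansion $\p_x^k[u,(-\p_x^2)^\nu]v_x=\sum_{l}\binom{k}{l}[\p_x^{k-l}u,(-\p_x^2)^\nu](\p_x^{l}v)_x$ followed by \eqref{com-e1}, while it delegates \eqref{5.3}, \eqref{e1} and the closing combinatorial step $\sum_{j}(j+1)^{-2}(k-j+1)^{-2}\leq C(k+1)^{-2}$ to \cite{HM} and \cite{YY}, which is precisely what you spell out. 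One small correction in part (2): $\sup_{k\geq 0}r^k(k+1)=1$ when $r\leq 1/e$, so the intermediate bound $1/(e|\ln r|)$ is false there; the standard fix is $(k+1)r^k\leq\sum_{j=0}^{k}r^j\leq(1-r)^{-1}=s/(s-s')$ with $r=s'/s$, which yields \eqref{e1} for all $0<s'<s<1$.
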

\begin{proof} The properties \eqref{5.3} and \eqref{e1} follow directly from analogous ones found in \cite{HM} just by replacing $H^2$ with $B^{s_0}_{2,1}$ and we then prove \eqref{e2}. Since
\begin{equation*}
\|\p_x^k\p^2_xP(D)u\|_{B^{s_0}_{2,1}}
\leq C\|\p_x^ku\|_{B^{s_0-2\n+2}_{2,1}}\leq C\|\p_x^ku\|_{B^{s_0}_{2,1}},
\end{equation*}
then it follows that
\begin{align*}
\normmm {\p^2_xP(D)u}_{s}
&=\sup_{k\in\N_0}\frac {s^k\|\p_x^k\p^2_xP(D)u\|_{B^{s_0}_{2,1}}}{k!/(k+1)^2}\nonumber\\
&\leq \sup_{k\in\N_0}\frac {s^k\|\p^ku\|_{B^{s_0}_{2,1}}}{k!/(k+1)^2}
=\normmm u_{s}.
\end{align*}
The other estimates in \eqref{e2} can be obtained similarly as above. Now we prove \eqref{e3}. As
\begin{align*}
&\|\p_x^kP(D)[u,(-\p_x^2)^\n]v_x\|_{B^{s_0}_{2,1}}\nonumber\\
&\leq \|\p_x^k\left([u,(-\p_x^2)^\n]v_x\right)\|_{B^{s_0-2\n}_{2,1}}\nonumber\\
&=\sum_{l=0}^{k}\left(                 % 左括号
  \begin{array}{ccc}   %该矩阵一共3 列，每一列都居中放置
    k \\  %第一行元素
    l \\  %第二行元素
  \end{array}
\right)
\|[\p_x^{k-l}u,(-\p_x^2)^\n](\p_x^lv)_x\|_{B^{s_0-2\n}_{2,1}}\nonumber\\
&\leq C \sum_{l=0}^{k}\left(                 %左括号
  \begin{array}{ccc}   %该矩阵一共3 列，每一列都居中放置
    k \\  %第一行元素
    l \\  %第二行元素
  \end{array}
\right)
\|\p_x^{k-l}u\|_{B^{s_0}_{2,1}}\|\p_x^{l}v\|_{B^{s_0}_{2,1}}\nonumber\\
&=C\|\p_x^{k}u\|_{B^{s_0}_{2,1}}\|v\|_{B^{s_0}_{2,1}}
+C \sum_{l=1}^{k}\left(                 %左括号
  \begin{array}{ccc}   %该矩阵一共3 列，每一列都居中放置
    k \\  %第一行元素
    l \\  %第二行元素
  \end{array}
\right)\|\p_x^{k-l}u\|_{B^{s_0}_{2,1}}\|\p_x^{l}u\|_{B^{s_0}_{2,1}}
\nonumber\\
&\leq C \normmm v_{s}\|\p_x^{k}u\|_{B^{s_0}_{2,1}}
+C \sum_{l=1}^{k}\left(                 %左括号
  \begin{array}{ccc}   %该矩阵一共3 列，每一列都居中放置
    k \\  %第一行元素
    l \\  %第二行元素
  \end{array}
\right)\|\p_x^{k-l}u\|_{B^{s_0}_{2,1}}\|\p_x^{l}u\|_{B^{s_0}_{2,1}}.
\end{align*}
Then proceeding a similar argument as the proof of Lemma 2.1 in \cite{YY}, we complete the proof of Lemma \ref{lem5.1}.
\end{proof}
\begin{theorem}(\cite{RD5})\label{the5.1}
Let $\{X_s\}_{0<s<1}$ be a scale of decreasing Banach spaces, namely for any
$s'<s$ we have $X_{s}\subset X_{s'}$ and $\normmm {\cdot}_{s'}\leq \normmm {\cdot}_{s}$. Consider the Cauchy problem
\begin{equation}\label{5.4}
\begin{cases}
\frac{du}{dt}=F(t,u(t)),\\
u(0)=0.
\end{cases}
\end{equation}
Let $T, R$ and $C$ be positive constants and assume that $F$ satisfies the following conditions:

(1) If for $0<s'<s<1$ the function $t\mapsto u(t)$ is real analytic in $|t|< T$ and continuous on $|t|\leq T$ with values in $X_s$ and
\begin{equation}\label{5.5}
\sup_{|t|\leq T}\normmm u_s<R,
\end{equation}
then $t\mapsto F(t,u(t))$ is a real analytic function on $|t|< T$ with values in $X_{s'}$.

(2) For any $0<s'<s<1$ and any $u,v\in X_s$ with $\normmm u_s<R$, $\normmm v_s<R$,
\begin{equation}\label{5.6}
\sup_{|t|\leq T}\normmm{F(t,u)-F(t,v)}_{s'}\leq \frac {C}{s-s'}\normmm {u-v}_s.
\end{equation}

(3) There exists $M>0$ such that for any $0<s<1$
\begin{equation}\label{5.7}
\sup_{|t|\leq T}\normmm{F(t,0)}_{s}\leq \frac {M}{1-s}.
\end{equation}
Then there exists a $T_0\in (0,T)$ and a unique function $u(t)$, which for every $0<s<1$ is real analytic in $|t|<(1-s) T_0$ with values in $X_s$, and is a solution to the Cauchy problem \eqref{5.4}.
\end{theorem}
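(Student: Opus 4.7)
The natural approach is an Ovsyannikov-Nishida-Nirenberg scheme based on Picard iteration in the scale $\{X_s\}$, with the loss-of-derivative factor $1/(s-s')$ in hypothesis (2) compensated at each iterate by a carefully calibrated shrinkage of both the time interval and the scale parameter. First I would rewrite \eqref{5.4} as the integral equation $u(t)=\int_0^t F(\tau,u(\tau))\,d\tau$ and define the Picard iterates $u_0(t)\equiv 0$ and $u_{n+1}(t)=\int_0^t F(\tau,u_n(\tau))\,d\tau$. Using hypotheses (1)--(3), one checks inductively that each $u_n(t)$ is a real analytic function of $t$ with values in $X_s$ for every $0<s<1$ on some shrinking disc.

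The heart of the proof is the inductive estimate. Fix $0<s<1$ and set $a(s,t):=(1-s)T_0-|t|$, which should govern the domain of existence. I would try to establish, for a constant $L>0$ depending on $C$ and $R$, an inequality of the form
\begin{equation*}
\normmm{u_{n+1}(t)-u_n(t)}_{s}\leq M\,\frac{(L|t|)^{n+1}}{a(s,t)^n}\cdot\frac{1}{n+1},
\end{equation*}
or an equivalent telescoping bound. To pass from $n$ to $n+1$, one writes
\begin{equation*}
u_{n+1}(t)-u_n(t)=\int_0^t \bigl[F(\tau,u_n(\tau))-F(\tau,u_{n-1}(\tau))\bigr]\,d\tau,
\end{equation*}
applies (5.6) at an intermediate scale $s<s'<1$, and then optimizes the choice of $s'$ as a function of $\tau$ (typically $s'=s+(1-s-|\tau|/T_0)/(n+1)$, so that the loss $1/(s'-s)$ is absorbed by a geometric factor of order $(1+1/n)^n\to e$). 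Summing the resulting geometric series yields uniform convergence of $u_n$ on $\{|t|<(1-s)T_0\}$ with values in $X_s$ as long as $T_0$ is chosen so that $LT_0$ is small enough for the resulting series to be dominated by a convergent majorant and for the bound $\normmm{u_n}_{s}<R$ required by (5.5) to be preserved throughout the iteration.

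Once the limit $u(t)$ is shown to exist in $X_s$ for every $s<1$ and every $|t|<(1-s)T_0$, analyticity in $t$ follows because each iterate is analytic (as a $t$-integral of an analytic function by (1)) and the convergence is uniform on compact subsets of the open disc; then I would pass to the limit in the integral identity using the Lipschitz bound (2) to conclude that $u$ solves $du/dt=F(t,u)$. Uniqueness is an immediate corollary of (2): if $u,v$ are two solutions in $X_s$ with $\normmm u_s,\normmm v_s<R$, then $\normmm{u(t)-v(t)}_{s'}$ satisfies, by the same scale-shrinking device, a Gronwall-type inequality that forces $u\equiv v$.

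The principal obstacle is the scale-shrinkage bookkeeping in the inductive step. A naive application of (5.6) with a fixed intermediate $s'$ gives a factor $1/(s-s')$ at each stage and the product diverges; making the gap between consecutive scales depend linearly on $n$ while simultaneously shrinking the admissible time domain is what preserves convergence, and verifying that the chosen sequences of scales and radii are simultaneously compatible with (5.5)--(5.7) is the delicate point. Hypothesis (5.7) is used only to control $u_1(t)=\int_0^t F(\tau,0)\,d\tau$ and thereby seed the induction.
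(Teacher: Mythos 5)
The paper does not prove Theorem 5.1 at all: it is quoted from Chemin \cite{RD5} as a black box (an abstract Cauchy--Kowalevski theorem of Ovsyannikov--Nirenberg--Nishida type), so there is no in-paper proof to compare against. Your outline --- Picard iteration in the scale $\{X_s\}$, with the intermediate index $s'$ made to depend on the iteration step so that the loss $1/(s'-s)$ from hypothesis (2) contributes only a factor of order $(1+1/n)^n\le e$, hypothesis (3) seeding the first iterate and hypothesis (2) driving both convergence and the scale-shrinking Gronwall argument for uniqueness --- is precisely the standard proof given in the cited reference, and it is correct as a sketch.
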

To prove Theorem \ref{the1.2}, we need to show that all three conditions of the abstract version of the Cauchy-Kowalevski theorem (Theorem \ref{the5.1}) hold for system \eqref{3.2} on the scale $\{X_s\}_{0<s<1}$. To this end, we restate the Cauchy problem \eqref{3.2} in a more convenient form. Let $u_1=u$, $u_2=u_x$, then the problem \eqref{3.2} is transformed in a system for $u_1$ and $u_2$.
\begin{equation}\label{5.8}
\begin{cases}
\p_tu_1=-u_2-\frac 1 2 \p_x(u_1^2)+\p_xP(D)f_1(u_1)+P(D)f_2(u_1,u_2)=F_1(u_1,u_2),\\
\p_tu_2=-\p_x(u_2+u_1u_2)+\p^2_xP(D)f_1(u_1)+\p_xP(D)f_2(u_1,u_2)\\
\qquad=F_2(u_1,u_2),\\
u_1(x,0)=u_0(x),u_2(x,0)=u'_0(x).
\end{cases}
\end{equation}
\begin{proof}[Proof of Theorem \ref{the1.2}] Let $u=(u_1,u_2)$, $F=(F_1,F_2)$ in \eqref{5.8} and $X_s$ be a scale of decreasing Banach spaces defined as $X_s=E_s\times E_s$. Since the map $F(u_1,u_2)$ does not depend on $t$ explicitly, we just need to verify the first two conditions of Theorem \ref{the5.1}.

Obviously, $t\mapsto F(t,u(t))=(F_1(u_1,u_2),F_2(u_1,u_2))$ is real analytic if $t\mapsto u_1(t)$ and $t\mapsto u_2(t)$ are both real analytic. Hence, the verification of the first condition of the abstract theorem needs only to show that for $s'<s$, $F_1(u_1,u_2)$ and $F_2(u_1,u_2)$ are in $E_{s'}$ for $u_1,u_2\in E_s$. By Lemma \ref{lem5.1}, we can get the estimates of $F_1$ and $F_2$ as
\begin{align}\label{5.9}
\normmm{F_1(u_1,u_2)}_{s'}&=\normmm{-u_2-\frac 1 2 \p_x(u_1^2)+\p_xP(D)f_1(u_1)+P(D)f_2(u_1,u_2)}_{s'}\nonumber\\
&\leq \frac {C}{s-s'}\LC\normmm{u_1}_s+\normmm{u_1}^2_s\RC+C\LC\normmm{u_1}_s+\normmm{u_1}^2_s\RC,
\end{align}
and
\begin{align}\label{5.10}
&\normmm{F_2(u_1,u_2)}_{s'}\nonumber\\
&=\normmm{-\p_x(u_2+u_1u_2)+\p^2_xP(D)f_1(u_1)+\p_xP(D)f_2(u_1,u_2)}_{s'}\nonumber\\
&\leq \frac {C}{s-s'}\LC\normmm{u_2}_s+\normmm{u_1}_s\normmm{u_2}_s+\normmm{u_1}^2_s\RC+C\LC\normmm{u_1}_s+\normmm{u_1}^2_s\RC.
\end{align}
We proceed to verify the second condition of the abstract theorem. Employing the triangle inequality and Lemma \ref{lem5.1}, we have
\begin{align}\label{5.11}
&\normmm{F_1(u_1,u_2)-F_1(v_1,v_2)}_{s'}\nonumber\\
&\leq\normmm{(v_2-u_2)+\frac 1 2 \p_x(v_1^2-u_1^2)}_{s'}
\nonumber\\
&+\normmm{\p_xP(D)\LC f_1(u_1)-f_1(v_1)\RC+P(D)\LC f_2(u_1,u_2)-f_2(v_1,v_2)\RC}_{s'}\nonumber\\
&\leq C\normmm{u_2-v_2}_{s}+\frac{C}{s-s'}\normmm{u_1-v_1}_{s}\normmm{u_1+v_1}_{s}\nonumber\\
&+\normmm{\p_xP(D)\LC (u_1-v_1)+(u_1-v_1)(u_1+v_1)\RC}_{s'}\nonumber\\
&+\normmm{P(D)\LC[u_1-v_1,(-\p_x^2)^{\n}]\p_xu_1-[v_1,(-\p_x^2)^{\n}](v_1-u_1)_x\RC}_{s'}\nonumber\\
&\leq C\normmm{u_2-v_2}_{s}
+\frac{C}{s-s'}\normmm{u_1-v_1}_{s}(\normmm{u_1}_{s}+\normmm{v_1}_{s})+C\normmm{u_1-v_1}_{s}\nonumber\\
&+C\normmm{u_1-v_1}_{s}(\normmm{u_1}_{s}+\normmm{v_1}_{s})\nonumber\\
&\leq \frac {C}{s-s'}\normmm{u-v}_{X_s}.
\end{align}
Similarly, we can show that
\begin{equation}\label{5.12}
\normmm{F_2(u_1,u_2)-F_2(v_1,v_2)}_{s'}\leq \frac {C}{s-s'}\normmm{u-v}_{X_s}
\end{equation}
holds. The conditions (1)-(3) are now easily verified once our system \eqref{5.8} is transformed into a new system with zero initial data as in \eqref{5.4}. This completes the proof of Theorem \ref{the1.2}.
\end{proof}

\vspace{0.5cm}
\noindent {\bf Acknowledgements.}
The work of Fan is supported by a NSFC Grant No. 11701155. The work of Gao is partially supported  by the NSFC grant No. 11531006, and the Jiangsu Center for Collaborative Innovation in Geographical Information Resource and Applications. The work of Yan is supported by the NSFC Grant No. 11771127.

\end{document}